\documentclass{amsart}

\usepackage[paperwidth=210mm,paperheight=297mm,hmargin={35mm,35mm},vmargin={35mm,35mm}]{geometry} 
\usepackage{amsthm,amssymb,latexsym,amsmath,mathbbol}
\usepackage{mathrsfs}
\usepackage{graphicx}
\usepackage{hyperref}
\input xy
\xyoption{all}
\usepackage[all]{xy}

\usepackage{amsthm,amssymb,latexsym,amsmath}
\usepackage{mathrsfs}
\usepackage{graphicx}
\usepackage{color}
\usepackage[latin1]{inputenc}
\usepackage[all]{xy}

\renewcommand{\dim}{\mathrm{dim}}

\newcommand{\OO}{{\mathcal{O}}}

\newcommand{\Sing}{{\rm Sing}}
\newcommand{\codim}{{\rm codim}}


\newtheorem{nada}{Nada}[section]

\newtheorem{proposition}[nada]{Proposition}
\newtheorem{corollary}[nada]{Corollary}
\newtheorem{thm}[nada]{Theorem}
\newtheorem*{thm*}{Theorem}
\newtheorem{lemma}[nada]{Lemma}

\newtheorem{rmk}[nada]{Remark}
\newtheorem{example}[nada]{Example}

\newcommand{\bc}{\begin{center}}
\newcommand{\ec}{\end{center}}
\newcommand{\noi}{\noindent}

\theoremstyle{plain}


\begin{document}

\title{A note of characteristic class for singular varieties}
\hyphenation{ho-mo-lo-gi-cal}
\hyphenation{fo-lia-tion}

\begin{abstract}

In this work we study characteristic classes of possibly singular varieties embedded as a closed subvariety of a nonsingular variety. In special, we express the Schwartz-MacPherson class in terms of the $\mu$-class and Chern class of the sheaves of logarithmic and multi-logarithmic differential forms. As an application we show an expression for Euler characteristic of a complement of a singular variety. \\

\noindent \textit{Keywords:} Characteristic class; Logarithmic sheaf; Milnor number; Chern class.

\end{abstract}

\author{Antonio M. Ferreira}
\address{ Antonio Marcos Ferreira da Silva \\ DMA - UFES,  Rodovia BR-101, Km 60, Bairro Litor\^aneo, S\~ao Mateus-ES, Brazil, CEP 29932-540}
\email{antonio.m.silva@ufes.br}

\author{Fernando Louren\c co}
\address{ Fernando Louren\c co \\ DMM - UFLA ,  Campus Universit\'ario, Lavras MG, Brazil, CEP 37200-000}
\email{fernando.lourenco@ufla.br}

\thanks{ }
\subjclass{Primary 32S65, 37F75; secondary 14F05}

\maketitle

\section{Introduction}

Let $X$ be a possibly singular variety in a nonsingular variety $M$. When $X$ is regular, there is a well know notion of the characteristic class of $X$, the Chern class, defined in many ways under its tangent bundle, see for instance, \cite{Fulton, Hart}. For the original definition see \cite{Chern}. On the other hand, if $X$ is singular one has different ways to generalize this characteristic class of $X$.

The first extension of Chern classes for a possibly singular variety is the Schwartz-MacPherson class, it was done independently by M.-H. Schwartz in 1965, see \cite{BrSch,schw} as an element on the cohomology group and by R. MacPherson in 1974 in \cite{MacPherson} as an element of the homology group.

Another important class with which we going to work in this paper is the Fulton class, denoted by $c_{F}(X)$, see \cite{Fulton}. This class is defined over a scheme $X$ that can be embedded in a nonsingular variety $M$ and it is proved that it is independent of the choice of embedded. An advantage of this class is that it can be defined over arbitrary fields and in a completely algebraic fashion. On the other hand, a disadvantage it is does not satisfy at first sight nice functorial properties.

The Milnor number was initially defined by Milnor in 1968,  see \cite{Mil}, to a hypersurface with isolated singularities. After, independently in 1971 Hamm \cite{Ham} and Lê in 1974 \cite{Le} extended this index for local complete intersections still with isolated singular points. More recently, in 1988, see \cite{Paru}, Parusinski extended the notion of Milnor number to nonisolated singularities. P. Aluffi in 1995, see \cite{Aluffi1},  defined another important class for the singular set of a hypersurface, the $\mu-$class, which in the case of isolated singularities coincides with the Minor number.

In \cite{AluBr} Aluffi et al., proved a formula relating the Mather
class, Schwartz-MacPherson class, and the class of the virtual tangent bundle of a hypersurface in a nonsingular variety, under certain assumptions in the singular locus of a hypersurface $X$.

Using the computer, Aluffi in \cite{Alu3} developed an algorithm computing to calculate some characteristic classes. The program computed the push-forward to $\mathbb{P}^{n}$ of the Schwartz-MacPherson class, and the Fulton class, and showed as is well known, that the Euler
characteristic equals the degree of the $0$-dimension component of the Schwartz-MacPherson.

In general, it is very hard to calculate the Chern class to singular variety, and there are few results in this direction. On the other hand, as suggested by J.-P. Brasselet, P. Aluffi in \cite{Aluffi} (see    Theorem \ref{thm 1.1} below) shows an expression to the Schwartz-MacPherson class by using the sheaf of logarithmic differential forms.

The guiding theme of this paper is to express a way of to calculate the Schwartz-MacPherson class in terms of the sheaf of logarithmic and multi-logarithmic differential forms with poles in certain varieties, when this sheaf is not locally free. For this, we use the comparison between the Schwartz-MacPherson and Fulton classes for hypersurface due P. Aluffi, see Theorem \ref{thm2.1}.
When the variety $X$ is a local complete intersection, the difference between these classes is called Milnor class, see (\cite{Brasselet}, Definition 1, p. 46).

Let $X$ be an algebraic variety over an algebraically closed field of characteristic zero. Assume that $X$ is embedded as a closed subvariety of a nonsingular variety $M$, by $i : X \rightarrow M.$

\begin{thm}[\cite{Aluffi}, Theorem 1]\label{thm 1.1} Let $X$ be as above. Let $ \pi : \tilde{M} \rightarrow M$ be a birational map with $\tilde{M}$ a nonsingular variety, such that $X^{'} = \Big(\pi^{-1}(X)\Big)_{red}$ is a divisor with smooth components and normal crossing in $\tilde{M}$, and $\pi|_{\tilde{M}-X^{'}}$ is an isomorphism. Then

$$ i_{\ast}c_{SM}(X) = c(TM)\cap [M]- \pi_{\ast} \Big( c(\Omega^{1}_{\tilde{M}}(\log X^{'})^{\vee})\cap [\tilde{M}]\Big)  \in A_{\ast}M.$$

\end{thm}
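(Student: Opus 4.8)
The plan is to deduce the formula from three standard ingredients of the Chern--Schwartz--MacPherson theory: the additivity of the transformation $c_{SM}$ on constructible functions, its functoriality under proper pushforward, and its normalization $c_{SM}(\mathbf 1_Y)=c(TY)\cap[Y]$ for $Y$ smooth. The only genuinely geometric input will be a Chern class identity for the logarithmic tangent sheaf of a normal crossing divisor, which is where the sheaf $\Omega^1_{\tilde M}(\log X')^\vee$ enters.

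First I would split the constant function on $M$. Writing $U=M\setminus X$, additivity applied to $\mathbf 1_M=\mathbf 1_U+\mathbf 1_X$ gives
$$c(TM)\cap[M]=c_{SM}(\mathbf 1_M)=c_{SM}(\mathbf 1_U)+i_\ast c_{SM}(X),$$
since $c_{SM}(\mathbf 1_X)=i_\ast c_{SM}(X)$ by definition and $M$ is smooth. Hence the theorem is equivalent to the identity $c_{SM}(\mathbf 1_U)=\pi_\ast\big(c(\Omega^1_{\tilde M}(\log X')^\vee)\cap[\tilde M]\big)$, and the problem is reduced to the open complement. Next I would pass to $\tilde M$: set $\tilde U=\tilde M\setminus X'$. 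Since $X'=(\pi^{-1}(X))_{red}$ and $\pi|_{\tilde U}$ is an isomorphism onto $U$, the fibre of $\pi$ over a point of $U$ is a single point, while over a point of $X$ it lies in $X'$ and so meets $\tilde U$ emptily; consequently the constructible pushforward is $\pi_\ast\mathbf 1_{\tilde U}=\mathbf 1_U$. Because such a $\pi$ is proper, functoriality of $c_{SM}$ for proper morphisms yields $c_{SM}(\mathbf 1_U)=c_{SM}(\pi_\ast\mathbf 1_{\tilde U})=\pi_\ast c_{SM}(\mathbf 1_{\tilde U})$.

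It then remains to identify $c_{SM}(\mathbf 1_{\tilde U})$ with $c(\Omega^1_{\tilde M}(\log X')^\vee)\cap[\tilde M]$; being a statement about the smooth ambient space $\tilde M$ with its normal crossing divisor, this is the crux. Writing $X'=\sum_{i=1}^r D_i$ with the $D_i$ smooth and transverse, I would use inclusion--exclusion $\mathbf 1_{\tilde U}=\sum_{I}(-1)^{|I|}\mathbf 1_{D_I}$, where $D_I=\bigcap_{i\in I}D_i$ is smooth, together with additivity and the normalization to obtain $c_{SM}(\mathbf 1_{\tilde U})=\sum_I(-1)^{|I|}(\iota_I)_\ast\big(c(TD_I)\cap[D_I]\big)$, with $\iota_I\colon D_I\hookrightarrow\tilde M$. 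Adjunction $c(TD_I)=c(T\tilde M|_{D_I})/\prod_{i\in I}(1+[D_i])$ and the self-intersection relation $(\iota_I)_\ast[D_I]=\big(\prod_{i\in I}[D_i]\big)\cap[\tilde M]$ convert this into
$$c(T\tilde M)\prod_{i=1}^r\Big(1-\tfrac{[D_i]}{1+[D_i]}\Big)\cap[\tilde M]=c(T\tilde M)\prod_{i=1}^r\frac{1}{1+[D_i]}\cap[\tilde M],$$
which equals $c(\Omega^1_{\tilde M}(\log X')^\vee)\cap[\tilde M]$ by the standard Chern class formula for the logarithmic tangent sheaf of a normal crossing divisor. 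Substituting back into the two displays completes the proof.

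The main obstacle I anticipate is precisely this last step: establishing the normal-crossing identity rigorously, in particular justifying the self-intersection formula $(\iota_I)_\ast[D_I]=\prod_{i\in I}[D_i]\cap[\tilde M]$ in the Chow group and verifying that adjunction is compatible with the combinatorial cancellation in the inclusion--exclusion sum. The transversality of the components is essential both for the inclusion--exclusion (so that each $D_I$ is smooth) and for the product formula for $c(\Omega^1_{\tilde M}(\log X')^\vee)$. By contrast, the functoriality and additivity of $c_{SM}$ I would simply invoke from MacPherson's theorem, and the fibrewise computation $\pi_\ast\mathbf 1_{\tilde U}=\mathbf 1_U$ is elementary.
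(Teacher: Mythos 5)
Your proposal is correct, and it is essentially Aluffi's original argument for this statement: reduce to the open complement via additivity ($\mathbb{1}_{M}=\mathbb{1}_{M\setminus X}+\mathbb{1}_{X}$), transfer to $\tilde{M}$ via $\pi_{\ast}\mathbb{1}_{\tilde{M}\setminus X^{'}}=\mathbb{1}_{M\setminus X}$ and functoriality, and then compute $c_{\ast}(\mathbb{1}_{\tilde{M}\setminus X^{'}})$ by inclusion--exclusion over the smooth normal crossing strata $D_I$, using adjunction, the self-intersection formula, and the residue sequence to recognize the answer as $c(T\tilde{M})\prod_i(1+[D_i])^{-1}\cap[\tilde{M}]=c(\Omega^{1}_{\tilde{M}}(\log X^{'})^{\vee})\cap[\tilde{M}]$. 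The paper's own proof (Section 5, which in fact establishes the variant stated as Theorem \ref{thm 1.2}, where $\tilde{X}$ is a single hypersurface with $\codim_{\tilde{M}}\Sing(\tilde{X})\geq 3$ rather than a normal crossing divisor) takes a genuinely different route: it uses the exact sequence $0\to\Omega^{1}_{\tilde{M}}\to\Omega^{1}_{\tilde{M}}(\log\tilde{X})\to\mathcal{O}_{\tilde{X}}\to 0$ to get $c(\Omega^{1}_{\tilde{M}}(\log\tilde{X})^{\vee})=c(T\tilde{M})/(1+\tilde{X})$, identifies the difference $c(T\tilde{M})\cap[\tilde{M}]-c(\Omega^{1}_{\tilde{M}}(\log\tilde{X})^{\vee})\cap[\tilde{M}]$ with the pushforward of the Fulton class $c_{F}(\tilde{X})=c(T\tilde{M}|_{\tilde{X}})\cap\frac{[\tilde{X}]}{1+\tilde{X}}$, and then converts $c_{F}$ into $c_{SM}$ by Aluffi's comparison theorem (Theorem \ref{thm2.1}), which introduces the $\mu$-class correction; your stratified computation never needs that comparison theorem and produces no correction term, but it requires every stratum $D_I$ to be smooth, whereas the paper's method treats $\tilde{X}$ as one (possibly singular) hypersurface and therefore extends beyond the normal crossing case at the cost of carrying the $\mu$-class of the singular scheme. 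Your two ancillary remarks are well taken: $\pi$ must be proper for $\pi_{\ast}$ on constructible functions to make sense (the paper adds this hypothesis in Theorem \ref{thm 1.2}), and the transversality of the components is exactly what justifies both $(\iota_I)_{\ast}[D_I]=\prod_{i\in I}[D_i]\cap[\tilde{M}]$ and the product formula for $c(\Omega^{1}_{\tilde{M}}(\log X^{'})^{\vee})$ coming from the residue sequence (\ref{seq1}).
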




Now, follows our main result:

\begin{thm}\label{thm 1.2} Let $i: X \longrightarrow M$ be an embedding of a closed subvariety $X$ in a complex algebraic nonsingular variety $M$. Let $\pi :\tilde{M} \longrightarrow M$ be a proper birational map with $\tilde{M}$ a nonsingular variety, such that $(\pi^{-1}(X))_{red}= \tilde{X}$ is a hypersurface with its singular scheme denoted by $\tilde{Y}$ and $\pi|_{\tilde{M}\setminus \tilde{X}}$ is an isomorphism. We assume that $\codim_{\tilde{M}}(\tilde{Y})\geq 3$. Then

\begin{equation}\label{01}
i_{\ast}c_{SM}(X) = c(TM)\cap [M]- \pi_{\ast} \Big( c(\Omega^{1}_{\tilde{M}}(\log \tilde{X})^{\vee})\cap [\tilde{M}]\Big) + \pi_{\ast}j_{\ast}c(\mathcal{L})^{\dim \tilde{X}} \cap \Big(\mu_{\mathcal{L}}(\tilde{Y})^{\vee} \otimes_{\tilde{M}} \mathcal{L}\Big),
\end{equation}

\noi where $\mu_{\mathcal{L}}(\tilde{Y})$ denotes the $\mu$-class of $\tilde{Y}$ with respect to $\mathcal{L} = \mathcal{O}_{\tilde{M}}(\tilde{X})$.

\end{thm}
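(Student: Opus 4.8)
The plan is to reduce the statement to an identity of classes on $\tilde M$ by functoriality and additivity of the Chern--Schwartz--MacPherson class, and then to compute the Chern class of the logarithmic sheaf along the singular hypersurface $\tilde X$, the failure of local freeness being measured by the $\mu$-class of $\tilde Y$.

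First I would exploit functoriality. Since $M$ is nonsingular, $c_{SM}(M)=c(TM)\cap[M]$, and additivity applied to the relation $\mathbb{1}_M=\mathbb{1}_X+\mathbb{1}_{M\setminus X}$ of constructible functions gives $i_{\ast}c_{SM}(X)=c(TM)\cap[M]-c_{SM}(M\setminus X)$ in $A_{\ast}M$. As $\pi$ restricts to an isomorphism $\tilde M\setminus\tilde X\xrightarrow{\sim}M\setminus X$, functoriality yields $c_{SM}(M\setminus X)=\pi_{\ast}c_{SM}(\tilde M\setminus\tilde X)$, and additivity on $\tilde M$ gives $c_{SM}(\tilde M\setminus\tilde X)=c(T\tilde M)\cap[\tilde M]-c_{SM}(\tilde X)$. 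Thus it suffices to prove the identity on $\tilde M$
\[
c(\Omega^1_{\tilde M}(\log\tilde X)^{\vee})\cap[\tilde M]-c(T\tilde M)\cap[\tilde M]+c_{SM}(\tilde X)=j_{\ast}\Big(c(\mathcal L)^{\dim\tilde X}\cap(\mu_{\mathcal L}(\tilde Y)^{\vee}\otimes_{\tilde M}\mathcal L)\Big),
\]
after which applying $\pi_{\ast}$ and substituting back reproduces \eqref{01}.

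Next I would analyze the dual logarithmic sheaf $\Omega^1_{\tilde M}(\log\tilde X)^{\vee}=T_{\tilde M}(-\log\tilde X)$ via the logarithmic derivation sequence $0\to T_{\tilde M}(-\log\tilde X)\to T_{\tilde M}\xrightarrow{df}\mathcal J\otimes\mathcal L\to 0$, where $\mathcal L=\mathcal O_{\tilde M}(\tilde X)$ and $\mathcal J\subseteq\mathcal O_{\tilde X}$ is the Jacobian ideal sheaf of $\tilde X$, satisfying $\mathcal O_{\tilde X}/\mathcal J=\mathcal O_{\tilde Y}$. This expresses $c(\Omega^1_{\tilde M}(\log\tilde X)^{\vee})\cap[\tilde M]$ as $c(T\tilde M)\cap[\tilde M]$ minus the contribution of the twisted Jacobian ideal $\mathcal J\otimes\mathcal L$, which is in turn governed by the Segre class $s(\tilde Y,\tilde M)$. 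In the top two codimensions this contribution reproduces the Fulton class $c_F(\tilde X)=c(T\tilde M)c(\mathcal L)^{-1}\cap[\tilde X]$; the deviation is supported on $\tilde Y$. I would then invoke Theorem~\ref{thm2.1} to write $c_{SM}(\tilde X)=c_F(\tilde X)+(\text{Milnor class})$, with the Milnor class also supported on $\tilde Y$ and expressible through $\mu_{\mathcal L}(\tilde Y)=c(T^{\ast}\tilde M\otimes\mathcal L)\cap s(\tilde Y,\tilde M)$. The plan is to rewrite both $\tilde Y$-supported corrections — the deviation of the Jacobian-ideal Chern class from $c_F(\tilde X)$ and the Milnor class — through the same Segre class $s(\tilde Y,\tilde M)$, and show they telescope into the single term $j_{\ast}(c(\mathcal L)^{\dim\tilde X}\cap(\mu_{\mathcal L}(\tilde Y)^{\vee}\otimes_{\tilde M}\mathcal L))$ by means of Aluffi's formalism for the operations $(\cdot)^{\vee}$ and $\otimes_{\tilde M}\mathcal L$ on classes.

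The hypothesis $\codim_{\tilde M}(\tilde Y)\geq 3$ should enter to guarantee that $\Omega^1_{\tilde M}(\log\tilde X)$ is reflexive and that the derivation sequence above computes its total Chern class with no spurious correction in codimensions $1$ and $2$, so that the entire discrepancy is the $\mu$-class supported on $\tilde Y$, which lives in dimension at most $\dim\tilde M-3$. The main obstacle is precisely the bookkeeping of the previous step: establishing that the two independent $\tilde Y$-supported contributions combine exactly into the stated $\mu$-class term, and rigorously justifying the Chern class computation for the non-locally-free sheaf $\Omega^1_{\tilde M}(\log\tilde X)^{\vee}$ under the codimension hypothesis. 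Once these are controlled, substituting back through the first step yields \eqref{01}.
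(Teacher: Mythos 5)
Your opening reduction is exactly the paper's: additivity and functoriality of $c_{\ast}$ convert the statement into the identity $c(T\tilde{M})\cap[\tilde{M}]-c(\Omega^{1}_{\tilde{M}}(\log\tilde{X})^{\vee})\cap[\tilde{M}] = j_{\ast}c_{SM}(\tilde{X})-j_{\ast}c(\mathcal{L})^{\dim\tilde{X}}\cap(\mu_{\mathcal{L}}(\tilde{Y})^{\vee}\otimes_{\tilde{M}}\mathcal{L})$ on $\tilde{M}$, which is the paper's equation (\ref{eq45}). The divergence, and the gap, is in how you propose to handle the logarithmic sheaf. The paper does \emph{not} use the derivation sequence $0\to T_{\tilde{M}}(-\log\tilde{X})\to T_{\tilde{M}}\to J_{\tilde{X}}(\tilde{X})\to 0$; it uses the residue sequence (\ref{seq0002}), $0\to\Omega^{1}_{\tilde{M}}\to\Omega^{1}_{\tilde{M}}(\log\tilde{X})\to\mathcal{O}_{\tilde{X}}\to 0$, which is where the hypothesis $\codim_{\tilde{M}}(\tilde{Y})\geq 3$ actually enters (it guarantees, by Dolgachev, that the cokernel of the residue map is all of $\mathcal{O}_{\tilde{X}}$; reflexivity of $\Omega^{1}_{\tilde{M}}(\log\tilde{X})$ holds unconditionally by Saito, so your stated role for the hypothesis is misplaced). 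Whitney's formula then gives $c(\Omega^{1}_{\tilde{M}}(\log\tilde{X}))=c(\Omega^{1}_{\tilde{M}})/c(\mathcal{O}(-\tilde{X}))$ with \emph{no} correction supported on $\tilde{Y}$, so after dualizing, the difference $c(T\tilde{M})\cap[\tilde{M}]-c(\Omega^{1}_{\tilde{M}}(\log\tilde{X})^{\vee})\cap[\tilde{M}]$ is exactly $j_{\ast}c_{F}(\tilde{X})$, and the single $\tilde{Y}$-supported term in the theorem comes from one application of Aluffi's comparison (Theorem \ref{thm2.1}). There is no telescoping of two corrections anywhere in the argument.

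Your route, by contrast, generates a second $\tilde{Y}$-supported contribution by design: from $0\to J_{\tilde{X}}(\tilde{X})\to\mathcal{O}_{\tilde{X}}(\tilde{X})\to\mathcal{O}_{\tilde{Y}}(\tilde{X})\to 0$ one gets $c(T_{\tilde{M}}(-\log\tilde{X}))=c(T\tilde{M})\,c(\mathcal{O}_{\tilde{Y}}\otimes\mathcal{L})\,(1+\tilde{X})^{-1}$, and the entire content of the theorem then rests on showing that the factor $c(\mathcal{O}_{\tilde{Y}}\otimes\mathcal{L})$ and the Milnor-class term from Theorem \ref{thm2.1} combine into precisely $j_{\ast}c(\mathcal{L})^{\dim\tilde{X}}\cap(\mu_{\mathcal{L}}(\tilde{Y})^{\vee}\otimes_{\tilde{M}}\mathcal{L})$. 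You explicitly defer this as ``bookkeeping,'' but it is not bookkeeping: $c(\mathcal{O}_{\tilde{Y}}\otimes\mathcal{L})$ involves the Chern classes of the structure sheaf of $\tilde{Y}$ while $\mu_{\mathcal{L}}(\tilde{Y})$ is built from the Segre class $s(\tilde{Y},\tilde{M})$, and these do not match term by term; reconciling them (or showing the extra factor is invisible under the identification $c(\Omega^{1}_{\tilde{M}}(\log\tilde{X})^{\vee})=c(T_{\tilde{M}}(-\log\tilde{X}))$, which for a non-locally-free reflexive sheaf is itself delicate) is the missing heart of the proof. As written, your proposal sets up the right reduction but does not prove the one identity that the theorem actually asserts.
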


We observe P. Aluffi, in the previous theorem, assumes that $X^{'}$ is a normal crossing divisor. This hypothesis is natural, and it is guaranteed for an embedded resolution of singularities in characteristic zero. However, it is possible to obtain a normal crossing divisor with a finite sequence of the blowups. We note that in some cases the amount of blowups can be very large. In our main result, we changed the normal crossing hypothesis in divisor by a divisor whose its singular set has codimension greater than or equal to three.

These hypotheses are satisfied at the case listed below, called Nash Construct for foliations. Let $\mathcal{F}$ be a holomorphic foliation of dimension $k$ on a $n$-dimensional manifold $M$. We consider, for each $x \in M$ the following vector space

$$ F(x) = \{ v(x); v \in \mathcal{F}_{x} \},$$ 

\noindent where $\mathcal{F}_{x}$ denotes the stalk of the sheaf $\mathcal{F}$ at $x$. We note that $\dim F(x) \leq k$ and the equality is when $x \in M \setminus \Sing(\mathcal{F})$. Then, we define a section 

$$ s: M \setminus \Sing(\mathcal{F}) \longrightarrow G(k,n),$$

\noindent gives by $s(x) = F(x),$ where $G(k,n)$ denotes de Grassmannian bundle of $k$-planes in $TM$. We define $M^{\nu}$ as the closure of the $Im(s)$ in $G(k,n)$ and call it of the Nash modification of $M$ with respect to $\mathcal{F}$, see \cite{BraSu, Ser} for more details. In this context, we have that $\pi: M^{\nu} \rightarrow M$ is a proper birational map, which is induced by the projection map of the bundle $G(k,n)$, since $\pi$ is an isomorphism from $M^{\nu} \setminus \pi^{-1}(\Sing(\mathcal{F}))$ to $M \setminus \Sing(\mathcal{F}).$ In a special case, Sertoz in \cite{Ser} was studied this construction with the hypothesis that $M^{\nu}$ is a manifold, that occurs when the coherent sheaf $\mathcal{F}$ is "nice", namely, either it is gives by complex actions of reductive groups or it has locally free tangent sheaf and its singular set is smooth, see (\cite{Ser}, Corollary 1.2 p.230).

\begin{example}	Let  $M = \mathbb{P}^{n}$ be a complex projective space and $\mathcal{F}$ be an one-dimensional holomorphic foliation on $\mathbb{P}^{n}$ with isolated singularities. Consider $\Sing(\mathcal{F})=X$. Thus by (\cite{Ser}, Corollary 1.2 p.230) one has $M^{\nu} \subset \mathbb{P}^{n} \times \mathbb{P}^{n-1}$ is a nonsingular variety of dimension $n$. And $\pi^{-1}(\Sing(\mathcal{F})) = \Sing(\mathcal{F}) \times \mathbb{P}^{n-1} = \tilde{X}$ with $\tilde{Y} =\Sing(\tilde{X})= \emptyset$. So, applying the Theorem \ref{thm 1.2},  we have: 
	
	\begin{equation}
	i_{\ast}c_{SM}(X) = c(T \mathbb{P}^{n})\cap [\mathbb{P}^{n}]- \pi_{\ast} \Big( c(\Omega^{1}_{M^{\nu}}(\log \mathbb{P}^{n-1})^{\vee})\cap [M^{\nu}]\Big) .
	\end{equation}
	
	In particular, taking the degrees, we have 
	
	\begin{equation}
	 \chi(X) = \chi(\mathbb{P}^{n})- \int \pi_{\ast} \Big( c(\Omega^{1}_{M^{\nu}}(\log(\tilde{X})^{\vee})\cap [M^{\nu}]\Big)
	\end{equation}

\noi or $$\chi(\mathbb{P}^{n} \setminus X) = \int \pi_{\ast}\Big( c(\Omega^{1}_{M^{\nu}}(\log(\tilde{X})^{\vee})\cap [M^{\nu}]\Big) = \chi(M^{\nu}\setminus \tilde{X}).$$

The last equality follows from Corollary 1.2, see (\cite{Moscow} p.491). This result was expected because $\pi$ is an isomorphism from $M^{\nu} \setminus \tilde{X}$ to $\mathbb{P}^{n} \setminus X.$



\end{example}

When the singular set $\tilde{Y}$ of $\tilde{X}$  is supported at a point $P$, the $\mu$-class of $\tilde{Y}$ is $m_P[P]$, where $m_{P}$ is the classical Minor number, see (\cite{Aluffi1}, \S2). So we get the following.

\begin{corollary} In conditions of the Theorem \ref{thm 1.1} and under the additional hypothesis that $\tilde{Y}$ is supported in a set of finitely many points, i.e. $\tilde{Y} = \{x_{1}, \ldots,x_{r} \}$, then

$$i_{\ast}c_{SM}(X) =c(TM)\cap [M]- \pi_{\ast}c(\Omega^{1}_{\tilde{M}}(\log \tilde{X})^{\vee})\cap [\tilde{M}] + \pi_{\ast}j_{\ast}(-1)^{\dim \tilde{M}}\sum_{i=1}^{r} m_{i}[x_{i}],$$

\noi where $m_{i}$ is the Milnor number of $\tilde{X}$ at $x_{i}$.

\end{corollary}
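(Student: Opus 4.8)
The plan is to deduce this as a direct specialization of Theorem \ref{thm 1.2} to the case in which the singular scheme $\tilde{Y}$ of $\tilde{X}$ reduces to a finite set of points. Starting from formula (\ref{01}), I would leave the first two summands $c(TM)\cap[M]$ and $-\pi_{\ast}\big(c(\Omega^{1}_{\tilde{M}}(\log\tilde{X})^{\vee})\cap[\tilde{M}]\big)$ untouched, and concentrate entirely on rewriting the last term under the new hypothesis. I would first remark that the standing assumption $\codim_{\tilde{M}}(\tilde{Y})\geq 3$ of Theorem \ref{thm 1.2} is automatically satisfied here whenever $\dim\tilde{M}\geq 3$, since a finite set of points has codimension $\dim\tilde{M}$ in $\tilde{M}$.

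The key input will be the pointwise value of the $\mu$-class. By (\cite{Aluffi1}, \S2), when $\tilde{Y}=\{x_{1},\dots,x_{r}\}$ is supported on finitely many points one has
\begin{equation*}
\mu_{\mathcal{L}}(\tilde{Y})=\sum_{i=1}^{r}m_{i}[x_{i}],
\end{equation*}
with $m_{i}$ the classical Milnor number of $\tilde{X}$ at $x_{i}$; in particular $\mu_{\mathcal{L}}(\tilde{Y})$ is a class of dimension zero. I would then evaluate the three operations that act on it in (\ref{01}). Setting $N=\dim\tilde{M}$, a point has codimension $N$ in $\tilde{M}$, so the signed dual multiplies a $0$-cycle by $(-1)^{N}$, giving $\mu_{\mathcal{L}}(\tilde{Y})^{\vee}=(-1)^{\dim\tilde{M}}\sum_{i=1}^{r}m_{i}[x_{i}]$. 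The remaining two operations are inert on a $0$-cycle: both the twist $(\,\cdot\,)\otimes_{\tilde{M}}\mathcal{L}$ and the cap product with $c(\mathcal{L})^{\dim\tilde{X}}$ retain only the degree-zero part of the relevant total Chern class, because every positive power of $c_{1}(\mathcal{L})$ vanishes when capped against a class of dimension zero. Substituting these evaluations and applying $\pi_{\ast}j_{\ast}$ to the surviving term will produce exactly $\pi_{\ast}j_{\ast}(-1)^{\dim\tilde{M}}\sum_{i=1}^{r}m_{i}[x_{i}]$.

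The part I expect to require the most care is the sign bookkeeping: one must verify that the reference variety governing the dual $(\,\cdot\,)^{\vee}$ is $\tilde{M}$ rather than $\tilde{X}$, since this is precisely what yields the factor $(-1)^{\dim\tilde{M}}$ instead of $(-1)^{\dim\tilde{X}}$. This is forced by the ambient $\tilde{M}$ appearing in the twist $\otimes_{\tilde{M}}$ and in the line bundle $\mathcal{L}=\mathcal{O}_{\tilde{M}}(\tilde{X})$. Once this convention is pinned down, the derivation is purely formal and needs no input beyond Theorem \ref{thm 1.2} together with the explicit value of the $\mu$-class recalled above.
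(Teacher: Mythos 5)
Your proposal is correct and follows essentially the same route as the paper: both specialize the main theorem to the case $\tilde{Y}=\{x_1,\dots,x_r\}$, use $\mu_{\mathcal{L}}(\tilde{Y})=\sum_i m_i[x_i]$, and then evaluate $c(\mathcal{L})^{\dim\tilde{X}}\cap\bigl(\mu_{\mathcal{L}}(\tilde{Y})^{\vee}\otimes_{\tilde{M}}\mathcal{L}\bigr)=(-1)^{\dim\tilde{M}}\sum_i m_i[x_i]$ via Aluffi's definitions of $(\cdot)^{\vee}$ and $\otimes_{\tilde{M}}\mathcal{L}$, with the dual contributing the sign $(-1)^{\dim\tilde{M}}$ and the other operations acting trivially on a $0$-cycle. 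Your write-up merely makes explicit the bookkeeping that the paper leaves to a citation of (Aluffi, p.~3996), and correctly reads the corollary's hypotheses as those of Theorem \ref{thm 1.2} rather than Theorem \ref{thm 1.1}.
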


\begin{proof} Under hypothesis of this corollary one has $\mu_{\mathcal{L}}(\tilde{Y}) =\sum_{i=1}^{r} m_{i}[x_{i}]$.
Now we calculate $\Big(\mu_{\mathcal{L}}(\tilde{Y})^{\vee} \otimes_{\tilde{M}}  \mathcal{L} \Big)$ by using  the definition on (\cite{Aluffi2}, p. 3996),

$$c(\mathcal{L})^{\dim \tilde{X}} \cap \Big(\mu_{\mathcal{L}}(\tilde{Y})^{\vee} \otimes_{\tilde{M}}  \mathcal{L}    \Big) =  (-1)^{\dim \tilde{M} } \sum_{i=1}^{r} m_{i}[x_{i}].$$

\end{proof}

As a consequence of the Theorem \ref{thm 1.1}, taking the degree in equation (\ref{01}), we obtain an expression for the Euler characteristic for the complement $M\setminus X$, of the variety $X$ in $M$.

\begin{corollary}\label{corollary 1.3} In conditions of Theorem \ref{thm 1.1} with the additional hypothesis that $M$ and $\tilde{M}$ are complete varieties, then

$$ \chi(M\setminus X)= \int_{\tilde{M}} c\Big(\Omega^{1}_{\tilde{M}}(\log \tilde{X})^{\vee}\Big)\cap [\tilde{M}]
+ (-1)^{n+1} \int_{\tilde{M}} \mu_{\mathcal{L}}(\tilde{Y}).$$

\end{corollary}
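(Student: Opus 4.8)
The plan is to integrate the identity \eqref{01} of Theorem \ref{thm 1.2}: take the degree of the zero--dimensional component of each side, using the hypothesis that $M$ and $\tilde{M}$ are complete. Throughout I write $\int_{(\cdot)}$ for the degree of the dimension-zero part of a class, and I will repeatedly use that a proper pushforward preserves degrees, so that $\int_{M}\pi_{\ast}\beta=\int_{\tilde{M}}\beta$ and $\int_{M}i_{\ast}\alpha=\int_{X}\alpha$. Since $n:=\dim M=\dim\tilde{M}$ ($\pi$ is birational), these are the degree maps I will apply to each of the four terms.

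First I would dispose of the two classical terms. By MacPherson's defining property of the Schwartz--MacPherson class, the degree of its zero-dimensional component is the Euler characteristic, so $\int_{M}i_{\ast}c_{SM}(X)=\int_{X}c_{SM}(X)=\chi(X)$ (here $X$ is complete, being closed in the complete variety $M$). For the first term on the right, since $M$ is complete and nonsingular, Gauss--Bonnet/Poincar\'e--Hopf gives $\int_{M}c(TM)\cap[M]=\int_{M}c_{n}(TM)\cap[M]=\chi(M)$, as only the top Chern class contributes to the degree. For the logarithmic term, properness of $\pi$ yields $\int_{M}\pi_{\ast}\big(c(\Omega^{1}_{\tilde{M}}(\log\tilde{X})^{\vee})\cap[\tilde{M}]\big)=\int_{\tilde{M}}c(\Omega^{1}_{\tilde{M}}(\log\tilde{X})^{\vee})\cap[\tilde{M}]$.

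The heart of the argument is the last term. By properness of $\pi$ and of $j$, its degree equals $\int_{\tilde{M}}c(\mathcal{L})^{\dim\tilde{X}}\cap\big(\mu_{\mathcal{L}}(\tilde{Y})^{\vee}\otimes_{\tilde{M}}\mathcal{L}\big)$. Here I would unwind Aluffi's operations $(\cdot)^{\vee}$ and $(\cdot)\otimes_{\tilde{M}}\mathcal{L}$ as defined in (\cite{Aluffi2}, p.~3996). Decomposing $\mu_{\mathcal{L}}(\tilde{Y})=\sum_{d}\mu_{(d)}$ into its dimension-$d$ pieces, the operation $(\cdot)^{\vee}$ changes the sign of $\mu_{(d)}$ according to its codimension in $\tilde{M}$, while tensoring by $\mathcal{L}$ and capping with the prefactor $c(\mathcal{L})^{\dim\tilde{X}}=c(\mathcal{L})^{n-1}$ is arranged so that, upon extracting the zero-dimensional part, each binomial coefficient attached to a piece with $d\ge 1$ vanishes; only $\mu_{(0)}$ survives, with overall sign $(-1)^{n}$. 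This is exactly the computation already recorded in the preceding corollary in the special case $\tilde{Y}=\{x_{1},\dots,x_{r}\}$, where the term collapses to $(-1)^{\dim\tilde{M}}\sum_{i}m_{i}[x_{i}]$; the same bookkeeping gives, in general,
\begin{equation*}
\int_{\tilde{M}}c(\mathcal{L})^{\dim\tilde{X}}\cap\big(\mu_{\mathcal{L}}(\tilde{Y})^{\vee}\otimes_{\tilde{M}}\mathcal{L}\big)=(-1)^{n}\int_{\tilde{M}}\mu_{\mathcal{L}}(\tilde{Y}).
\end{equation*}
Combining the four computations, the integrated form of \eqref{01} reads $\chi(X)=\chi(M)-\int_{\tilde{M}}c(\Omega^{1}_{\tilde{M}}(\log\tilde{X})^{\vee})\cap[\tilde{M}]+(-1)^{n}\int_{\tilde{M}}\mu_{\mathcal{L}}(\tilde{Y})$.

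Finally I would invoke additivity of the Euler characteristic for the closed decomposition $M=X\sqcup(M\setminus X)$, namely $\chi(M\setminus X)=\chi(M)-\chi(X)$, valid since $M$ is complete (hence compact) and $X$ is closed. Substituting the displayed identity and converting $-(-1)^{n}$ into $(-1)^{n+1}$ yields the asserted formula. The \emph{main obstacle} is the third paragraph: making precise, directly from Aluffi's definitions of $(\cdot)^{\vee}$ and $(\cdot)\otimes_{\tilde{M}}\mathcal{L}$, that the positive-dimensional components of $\mu_{\mathcal{L}}(\tilde{Y})$ contribute nothing to the degree, so that the entire last term collapses to $(-1)^{n}\int_{\tilde{M}}\mu_{\mathcal{L}}(\tilde{Y})$ with the correct sign. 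The hypothesis $\codim_{\tilde{M}}(\tilde{Y})\ge 3$ enters upstream, through Theorem \ref{thm 1.2}, to guarantee \eqref{01} itself; once \eqref{01} is in hand, the cancellation above is a formal feature of the dual--tensor calculus paired with the prefactor $c(\mathcal{L})^{\dim\tilde{X}}$.
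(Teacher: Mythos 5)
Your proposal is correct and follows essentially the same route as the paper: both reduce to taking degrees, using functoriality of $c_{\ast}$ under the proper maps $\pi$ and $i$, and the key identity $\int_{\tilde{M}}c(\mathcal{L})^{\dim\tilde{X}}\cap\big(\mu_{\mathcal{L}}(\tilde{Y})^{\vee}\otimes_{\tilde{M}}\mathcal{L}\big)=(-1)^{n}\int_{\tilde{M}}\mu_{\mathcal{L}}(\tilde{Y})$, which the paper simply cites from Aluffi (\cite{Aluffi2}, \S 4) and you verify by the binomial bookkeeping. The only cosmetic difference is that the paper takes degrees of the intermediate identity for $c_{\ast}(\mathbb{1}_{\tilde{M}\setminus\tilde{X}})$ from the proof of the main theorem, whereas you integrate the final formula and then invoke $\chi(M\setminus X)=\chi(M)-\chi(X)$; the two rearrangements are equivalent.
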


\begin{rmk} We observe that if  $\tilde{Y}$ is supported in a set of finitely many points, i.e. $\tilde{Y} = \{x_{1}, \ldots,x_{r} \}$, then we recover the result in (\cite{Moscow}, Corollary 1.2 (I), p.495)

$$\chi(\tilde{M}\setminus \tilde{X}) = (-1)^{n} \int_{\tilde{M}} c\Big( \Omega^{1}_{\tilde{M}} (\log \tilde{X})\Big)\cap[\tilde{M}] + (-1)^{n+1}\sum_{i=1}^{r} m_{i}, $$

\noi since $\chi(M\setminus X) = \chi(\tilde{M}\setminus \tilde{X})$ and $\displaystyle \int_{\tilde{M}} \mu_{\mathcal{L}}(\tilde{Y}) = \sum_{i=1}^{r} m_{i}.$
\end{rmk}

In the second part of this work we use the sheaves of multi-logarithmic differential forms associated to a complete intersection, see Section $4$ and \cite{AleTs, Ale_s} for more details about these sheaves. We prove the following  result involved its characteristic classes:

\begin{thm}\label{multlog} Let $i: X \longrightarrow M$ be an embedding of a closed subvariety $X$ in a complex algebraic nonsingular variety $M$ and $\pi :\tilde{M} \longrightarrow M$ be a proper birational map with $\tilde{M}$ a nonsingular variety, such that $(\pi^{-1}(X))_{red}= C = D_{1}\cap D_{2}$ is a complete intersection of smooth divisors and $\pi|_{\tilde{M}\setminus C}$ is an isomorphism. We assume that $\tilde{D} = D_{1} \cup D_{2}$ is a normal crossing divisor. Then

$$
i_{\ast}c_{SM}(X) = c(TM)\cap [M]- \pi_{\ast} \Big( c(\Omega^{1}_{\tilde{M}}(\log C)^{\vee})\cap [\tilde{M}]\Big) + \pi_{\ast}j_{\ast}c_{SM}(C) - \pi_{\ast}j_{\ast}c_{SM}(\tilde{D}),
$$
\noi where $\Omega^{1}_{\tilde{M}}(\log C)$ denotes the sheaf of multi-logarithmic differential 1-forms on $\tilde{M}$.

\end{thm}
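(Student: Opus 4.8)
The plan is to derive the formula from the functoriality of MacPherson's natural transformation $c_{\ast}$ on constructible functions, and then to reconcile the output with the stated right--hand side through a single Chern class identity comparing the multi--logarithmic sheaf $\Omega^{1}_{\tilde{M}}(\log C)$ of the complete intersection $C=D_{1}\cap D_{2}$ with the ordinary logarithmic sheaf $\Omega^{1}_{\tilde{M}}(\log \tilde{D})$ of the normal crossing divisor $\tilde{D}=D_{1}\cup D_{2}$.

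First I would carry out the functorial computation. Writing $\mathbf{1}_{Z}$ for the indicator function of a constructible set $Z$ and recalling the normalization $c_{\ast}(\mathbf{1}_{V})=c(TV)\cap[V]$ for $V$ nonsingular, the splitting $\mathbf{1}_{M}=\mathbf{1}_{M\setminus X}+\mathbf{1}_{X}$ together with naturality gives $i_{\ast}c_{SM}(X)=c(TM)\cap[M]-c_{\ast}(\mathbf{1}_{M\setminus X})$. Since $(\pi^{-1}(X))_{red}=C$, the map $\pi$ restricts to an isomorphism $\tilde{M}\setminus C\to M\setminus X$, so the fibrewise Euler characteristic computation yields $\pi_{\ast}\mathbf{1}_{\tilde{M}\setminus C}=\mathbf{1}_{M\setminus X}$; properness of $\pi$ and naturality of $c_{\ast}$ then give $c_{\ast}(\mathbf{1}_{M\setminus X})=\pi_{\ast}c_{\ast}(\mathbf{1}_{\tilde{M}\setminus C})$. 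Finally $\mathbf{1}_{\tilde{M}\setminus C}=\mathbf{1}_{\tilde{M}}-\mathbf{1}_{C}$ gives $c_{\ast}(\mathbf{1}_{\tilde{M}\setminus C})=c(T\tilde{M})\cap[\tilde{M}]-j_{\ast}c_{SM}(C)$. Combining these identities produces the intermediate expression
$$ i_{\ast}c_{SM}(X)=c(TM)\cap[M]-\pi_{\ast}\bigl(c(T\tilde{M})\cap[\tilde{M}]\bigr)+\pi_{\ast}j_{\ast}c_{SM}(C). $$

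Comparing this intermediate expression with the target formula, the theorem becomes equivalent to the identity
$$ c(T\tilde{M})\cap[\tilde{M}]=c\bigl(\Omega^{1}_{\tilde{M}}(\log C)^{\vee}\bigr)\cap[\tilde{M}]+j_{\ast}c_{SM}(\tilde{D}). $$
To treat this I would apply Theorem \ref{thm 1.1} to the divisor $\tilde{D}$ itself: as $\tilde{D}$ is already a smooth normal crossing divisor, one may take the resolution to be the identity, obtaining $j_{\ast}c_{SM}(\tilde{D})=c(T\tilde{M})\cap[\tilde{M}]-c(\Omega^{1}_{\tilde{M}}(\log \tilde{D})^{\vee})\cap[\tilde{M}]$. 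Substituting this, the required identity collapses to
$$ c\bigl(\Omega^{1}_{\tilde{M}}(\log C)^{\vee}\bigr)\cap[\tilde{M}]=c\bigl(\Omega^{1}_{\tilde{M}}(\log \tilde{D})^{\vee}\bigr)\cap[\tilde{M}], $$
i.e. the dual multi--logarithmic sheaf of the complete intersection $C$ and the dual logarithmic sheaf of the union $\tilde{D}$ have the same total Chern class after capping with $[\tilde{M}]$.

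Establishing this last equality is the main obstacle, and it is precisely where the normal crossing hypothesis on $\tilde{D}$ is used. I would argue locally with the description of multi--logarithmic forms recalled in Section $4$ (cf. \cite{AleTs, Ale_s}): choosing coordinates in which $D_{1}=\{x_{1}=0\}$ and $D_{2}=\{x_{2}=0\}$ meet transversally, one checks that the residue relations distinguishing the multi--logarithmic de Rham complex of $C$ only begin to act in degree $\geq \codim_{\tilde{M}}C=2$, so that in degree $1$ the sheaf $\Omega^{1}_{\tilde{M}}(\log C)$ coincides with $\Omega^{1}_{\tilde{M}}(\log \tilde{D})$, both being freely generated by $dx_{1}/x_{1}$, $dx_{2}/x_{2}$ and the remaining $dx_{i}$. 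This local identification globalizes to an isomorphism of sheaves under the normal crossing assumption and yields the displayed Chern class equality, hence the theorem. The delicate step is to verify this local normal form of $\Omega^{1}_{\tilde{M}}(\log C)$ carefully, since without the normal crossing condition the multi--logarithmic sheaf fails to be locally free and the comparison with $\Omega^{1}_{\tilde{M}}(\log \tilde{D})$ breaks down.
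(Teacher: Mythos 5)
Your argument follows the same skeleton as the paper's Section 7: the constructible--function computation giving $c_{\ast}(\mathbb{1}_{M\setminus X})=\pi_{\ast}\bigl(c(T\tilde M)\cap[\tilde M]-j_{\ast}c_{SM}(C)\bigr)$, the application of Theorem \ref{thm 1.1} to the normal crossing divisor $\tilde D$ with the identity map as resolution, and the reduction of the theorem to the single identity $c(\Omega^{1}_{\tilde M}(\log C)^{\vee})\cap[\tilde M]=c(\Omega^{1}_{\tilde M}(\log \tilde D)^{\vee})\cap[\tilde M]$, which is exactly Lemma \ref{basic}. Up to that point the proposal is correct and coincides with the paper's route.

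The gap is in your justification of that last identity. The sheaves $\Omega^{1}_{\tilde M}(\log C)$ and $\Omega^{1}_{\tilde M}(\log\tilde D)$ are \emph{not} isomorphic, not even locally at points of $C$ in the normal crossing situation. For $q=1<k=2$ one has $\Omega^{1}_{\tilde M}(\log C)=\Omega^{1}_{\tilde M}(D_{1})+\Omega^{1}_{\tilde M}(D_{2})$ (equation (\ref{dd01})): in coordinates with $D_{1}=\{x_{1}=0\}$ and $D_{2}=\{x_{2}=0\}$, the form $\omega=dx_{2}/x_{1}$ satisfies $dx_{1}\wedge\omega\in\Omega^{2}_{\tilde M}(D_{1})$ and $dx_{2}\wedge\omega=0$, so it is multi-logarithmic along $C$, yet $d\omega=-x_{1}^{-2}\,dx_{1}\wedge dx_{2}$ has a double pole along $D_{1}$, so $\omega\notin\Omega^{1}_{\tilde M}(\log\tilde D)$. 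In fact, at a point of $C$ the stalk of $\Omega^{1}_{\tilde M}(\log C)$ requires $2n$ minimal generators (the $dx_{i}/x_{1}$ and $dx_{i}/x_{2}$) and the sheaf fails to be locally free there, whereas $\Omega^{1}_{\tilde M}(\log\tilde D)$ is locally free of rank $n$; your claimed local normal form describes the latter sheaf, not the former, so the heuristic that the residue relations ``only begin to act in degree $\geq 2$'' is false. The statement you need is therefore only an equality of total Chern classes of two genuinely different sheaves, and it is exactly what Lemma \ref{basic} supplies: one computes $c(\Omega^{1}_{\tilde M}(\log C))$ from the presentation $0\to\Omega^{1}_{\tilde M}\to\Omega^{1}_{\tilde M}(D_{1})\oplus\Omega^{1}_{\tilde M}(D_{2})\to\Omega^{1}_{\tilde M}(\log C)\to 0$ together with the sequences $0\to\Omega^{1}_{\tilde M}\to\Omega^{1}_{\tilde M}(D_{i})\to\Omega^{1}_{\tilde M}(D_{i})/\Omega^{1}_{\tilde M}\to 0$, and compares the result with the residue sequence (\ref{seq1}) for the normal crossing divisor $\tilde D$. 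As written, your final step does not go through; replacing the local--isomorphism argument by this Chern class comparison is required to complete the proof.
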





















The paper is organized as follows. First, in order to make this work as self-contained as
possible, we provide some necessary definitions and considerations in Sections 2, 3 and 4. The proofs
of our main results appear in Sections 5, 6 and 7.

\section*{Acknowledgements}
The authors were partially supported by the FAPEMIG [grant number 38155289/2021] and FAPEMIG  RED-00133-21.

\section{Chern Classes of Singular Varieties}

Let $X$ be a complex algebraic variety and $A$ be a subvariety of $X$, we denote by $\mathbb{1}_{A}$ the characteristic function of $A$ which is constant equal to $1$ over $A$ and constant equal to $0$ elsewhere. A constructible function on $X$ is an integral linear combination of characteristic functions of closed subvarieties of $X$. The constructible functions on $X$ forms a group denoted by $\textbf{F}(X)$. It can be made a covariant functor in the following way: for a proper morphism $f: X\rightarrow Y$ the push-forward $f_{\ast}$ is defined by setting 
$$ f_{\ast}(\mathbb{1}_{A})(y)=\chi\big( {f^{-1}}(y)\cap A\big), $$ where $A$ is a subvariety of $X$ and extending by linearity.

It was conjectured by Deligne and Grothendieck in 1969 and proved by R. MacPherson \cite{MacPherson} in 1974, that there exists a natural  transformation $c_{\ast}$ from the functor $\bold{F}$ to homology, which, on a nonsingular variety $V$, assigns to the function $\mathbb{1}_{V}$  the Poincar\'e dual of the total Chern class of $V$. That is, 
$$c_{\ast}(\mathbb{1}_{V})= c(TV)\cap V.$$ So it is natural to consider the class $c_{\ast}(\mathbb{1}_{X})$ to arbitrary variety $X$. This class is denoted by $c_{SM}(X)$ and called of \textit{Schwartz-MacPherson class} of $X$. This class is the image, via Alexander duality isomorphism, of the previously class defined by M.-H. Schwartz in 1965 see \cite{BrSch}. Explicitly, MacPherson has proved that for all constructible functions $\alpha$, $\beta$, and proper morphism $f$, the class $c_{\ast}$ satisfy the conditions:

\begin{enumerate}

\item[(i)] $f_{\ast} c_{\ast}(\alpha) = c_{\ast}f_{\ast}(\alpha);  $

\item[(ii)] $c_{\ast}(\alpha + \beta) = c_{\ast}(\alpha) + c_{\ast}(\beta); $

\item[(iii)] $c_{\ast}(\mathbb{1}_{X}) = c(TX)\cap [X], \ \ if \ \ X \ \ \mbox{is} \ \ \mbox{smooth}.$
\end{enumerate}

Although MacPherson has initially considered complex algebraic varieties, Kennedy \cite{Kennedy} indicated  how MacPherson's theory can be made completely algebraic, by extending it to varieties over an arbitrary field $k$ of characteristic zero.

Let $X$ be now a scheme which can be embedded as a closed subscheme of a nonsingular variety $M$. Then the \textsl{Fulton class} of $X$, denoted by $c_{F}(X)\in A_{\ast}(X) $, was defined by W. Fulton 1984, see (\cite{Fulton}, Example 4.2.6) by setting 

$$ c_{F}(X)= c(TM_{|X})\cap s(X,M),$$ 

\noi where $s(X,M)$ denotes the Segre class of $X$ on $M$. It is independent of the choice of embedding. 

In particular case, when $X$ is a divisor, the Segre class of $X$ is given by $s(X,M)=\frac{[X]}{1+X}$, where by abuse of notation,  we denote by $X$ the first Chern class $c_1(\mathcal{O}(X) )$. So in this case 
\begin{equation} \label{eqFu} c_{F}(X)= c(TM_{|X})\cap  \dfrac{[X]}{1+X}. \end{equation} 

Another important characteristic class, the \textsl{$\mu$-class}, was introduced by P. Aluffi \cite{Aluffi1} in 1995. Let $Y$ be the singular scheme of a hypersurface $X$ on a smooth variety M,  and let  $\mathcal{L}=\mathcal{O}(X)$ be the line bundle associate to $X$. The \textsl{$\mu$-class} of $Y$ with respect to $\mathcal{L}$  is the class
$$\mu_{\mathcal{L}}(Y):=c(T^{\ast}M \otimes \mathcal{L})\cap s(Y,M)$$ in the Chow group  $A_{\ast}(Y)$. 

In particular, when $Y$ is supported at a point $P$, then $\mu(Y)=m_{P}[P]$, 
where $m_{P}$ is the Milnor number of $X$ at $P$ see (\cite{Aluffi1}, \S2).
In the following theorem, P. Aluffi has established an interesting  relationship between these classes.

\begin{thm}[Aluffi \cite{Aluffi2}, Theorem I.5]\label{thm2.1} Let $X$ be a hypersurface in a nonsingular variety $M$, let Y be its
singular scheme, and let $\mathcal{L}=\mathcal{O}(X)$. Then 

\begin{equation}\label{eq002}
c_{SM}(X)=c_{F}(X) + c(\mathcal{L})^{\dim X}\cap(\mu_{\mathcal{L}}(Y)^{\vee} \otimes_M \mathcal{L}).
\end{equation}

\end{thm}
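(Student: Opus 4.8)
The plan is to interpret the $\mu$-class as the localized top Chern class of a natural section whose zero scheme is exactly the singular scheme $Y$, and to recognize $c_{SM}(X)$ as the output of applying the same localization inside MacPherson's construction, with the Fulton class $c_F(X)$ emerging as the \emph{smooth} part and the $\mu$-class term as the residual part supported on $Y$.

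First I would set up the differential. Writing $X=\{f=0\}$ with $f$ a section of $\mathcal{L}=\mathcal{O}(X)$, the differential $df$ defines a section of $\Omega^1_M\otimes\mathcal{L}=T^{*}M\otimes\mathcal{L}$ whose degeneracy (vanishing) scheme is precisely $Y=\Sing(X)$. With this in hand, the expression $\mu_{\mathcal{L}}(Y)=c(T^{*}M\otimes\mathcal{L})\cap s(Y,M)$ is nothing but the contribution of $Y$ to the localized Chern class of the bundle $T^{*}M\otimes\mathcal{L}$: it records the excess of the section $df$ along $Y$ through the Segre class $s(Y,M)$. So the first task is to rewrite $c(\mathcal{L})^{\dim X}\cap(\mu_{\mathcal{L}}(Y)^{\vee}\otimes_{M}\mathcal{L})$ as a residual class supported on $Y$, where the twist $\otimes_{M}\mathcal{L}$ passes between $\mathcal{L}$-valued and ordinary cotangent data and the duality $(-)^{\vee}$ converts the cotangent bundle $T^{*}M$ back to the tangent side $TM$ with the signs $(-1)^{\codim}$.

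Second, I would compute $c_{SM}(X)=c_{*}(\mathbb{1}_X)$ through MacPherson's graph construction applied to $df$. For a hypersurface the relevant Nash-type modification is governed by the rational map $M\dashrightarrow\mathbb{P}(T^{*}M)$, $x\mapsto[df_x]$, whose indeterminacy locus is again $Y$; resolving it by blowing up $Y$ separates a smooth contribution from an exceptional one. On the smooth locus $X\setminus Y$ both $c_{SM}(X)$ and $c_F(X)$ agree with $c(TX)\cap[X]$, so the smooth part reproduces the virtual class $c_F(X)=c(TM|_{X})\cap s(X,M)$, while the exceptional part, supported on $Y$, is exactly what must be matched to the $\mu$-class term. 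The remaining step is Segre-class bookkeeping on $\mathrm{Bl}_Y M$: push forward the exceptional-divisor contributions and compare them with $s(Y,M)$, checking that the overall factor $c(\mathcal{L})^{\dim X}$, the twist $\otimes_{M}\mathcal{L}$ and the duality $(-)^{\vee}$ all appear with the correct multiplicities.

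The main obstacle is precisely this last matching: one must show that the exceptional contributions coming out of the graph construction assemble into $c(\mathcal{L})^{\dim X}\cap(\mu_{\mathcal{L}}(Y)^{\vee}\otimes_{M}\mathcal{L})$ with the right signs, which amounts to controlling the limit cycle of the construction and verifying the compatibility of the deformation to the normal cone of $Y$ with MacPherson's natural transformation $c_{*}$. This is the technical core of Aluffi's argument, and everything else reduces to formal manipulation of Segre and Chern classes.
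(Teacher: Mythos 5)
The paper does not actually prove this statement: Theorem \ref{thm2.1} is quoted from Aluffi (\cite{Aluffi2}, Theorem I.5) and used as a black box in the proofs of Theorems \ref{thm 1.2} and \ref{multlog}, so there is no internal argument to compare yours against; it has to be measured against Aluffi's original proof.

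Measured that way, your proposal is a plausible map of the terrain but not a proof, for three concrete reasons. First, $df$ is not a global section of $T^{*}M\otimes\mathcal{L}$: a section $f$ of a line bundle has no intrinsic differential, and the scheme cut out locally by $f$ together with its partials is the zero scheme of the induced section of the bundle of principal parts $\mathcal{P}^{1}_{M}(\mathcal{L})$, which is an extension of $\mathcal{L}$ by $\Omega^{1}_{M}\otimes\mathcal{L}$. This is precisely where the factor $c(T^{*}M\otimes\mathcal{L})$ in the definition of $\mu_{\mathcal{L}}(Y)$, the twist $\otimes_{M}\mathcal{L}$, and the extra $c(\mathcal{L})^{\dim X}$ come from, so glossing over it makes the sign and twist bookkeeping you defer impossible to carry out. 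Second, noting that $c_{SM}(X)$ and $c_{F}(X)$ agree over $X\setminus Y$ only shows their difference is a class supported on $Y$ (essentially the definition of the Milnor class, and nearly free); it does not identify that difference with $c(\mathcal{L})^{\dim X}\cap(\mu_{\mathcal{L}}(Y)^{\vee}\otimes_{M}\mathcal{L})$, which is the entire content of the theorem, and you explicitly leave this identification --- ``the technical core of Aluffi's argument'' --- undone. Third, the mechanism you invoke (MacPherson's graph construction plus deformation to the normal cone) is not the one that closes the gap in \cite{Aluffi2}: there the identification is obtained by proving that both sides of the formula transform identically under blowing up $M$ along a nonsingular center contained in $X$, so that resolution of singularities reduces everything to the case of a normal crossing divisor, where both sides are computed explicitly. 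Without that blowup-invariance step, or an equivalent substitute, the proposal remains an outline rather than a proof.
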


\section{The sheaf of logarithmic forms}

Let $M$ be a complex manifold of dimension $n$ and $X$ a reduced hypersurface on $M$. We consider $\Omega^q_{M}(X)$ the sheaf of differential $q$-forms on $M$ with at most simple poles along $ X$. We define a {\it logarithmic $q$-form along} $ X$ on an open subset $U \subset M$ by a meromorphic $q$-form $\omega$ on $U$, regular on the complement $U -  X$ and such that both forms $\omega$ and $d\omega$ are in the sheaf $\Omega^q_{M}(X)$.

Logarithmic $q$-forms along $X$ form a coherent sheaf of $\OO_M$-modules that we will denote simply by $\Omega_{M}^q(\log\,  X)$.  In this case, for any open subset $U\subset M$ we have 
$$
\Gamma(U,\Omega_{M}^q(\log\,  X)) = \{\omega \in \Gamma(U,\Omega^q_{M}( X)): d\omega \in \Gamma(U,\Omega^{q+1}_{M}( X))\}.
$$
See for example \cite{Alex1}, \cite{Dolga}   and  \cite{Sai}  for more details about the sheaf of logarithmic $q$-forms along $X$.

Take  $\Omega_{M}^1(\log\,  X)$, the sheaf of logarithmic $1$-forms  along $ X$. Its dual is the sheaf of logarithmic vector fields along $ X$, denoted by $T_{M}(-\log\,  X)$ or $Der(-\log X)$. With this notations we have the classical short exact sequence (see \cite{Dolga} \S 2) 
$$
\xymatrix{
0\ar[r]& T_{M}(-\log\,  X) \ar[r]& T_M \ar[r] &  \ar[r]  J_{X}(X)&0,  \\
}$$

\noindent where $J_{X}$ denotes the Jacobian ideal of $X$ which  is defined as the Fitting ideal
$$
J_{X}:=F^{n-1}(\Omega_X^1) \subset \mathcal{O}_X. 
$$


\hyphenation{theo-ry}

Saito in \cite{Sai} has showed that in general  $\Omega_{M}^1(\log\,  X)$ and $T_{M}(-\log \, X)$  are  reflexive sheaves. 
When  $ X$ is an analytic hypersurface with normal crossing singularities, the sheaves $\Omega_{M}^1(\log\,  X)$ and $T_{M}(-\log \, X)$ are  locally free. Furthermore, the Poincar\'e residue map (see \cite[Section 2]{Sai})

\centerline{
\label{seq1} \xymatrix{
\rm{Res}: \Omega_M^1(\log\,  X) \ar[r]  &  \OO_{ X} \cong \bigoplus_{i=1}^N\OO_{ X_i}  \\
}
}

\noindent  gives us  the following exact sequence of sheaves on $M$: 

\begin{eqnarray}\label{seq1}
\centerline{
\xymatrix{
0\ar[r]& \Omega_M^1 \ar[r]&   \Omega_M^1(\log\,  X)\ar[r]^{\text{Res}} &  \ar[r]   \bigoplus_{i=1}^N\OO_{ X_i}&0,  \\
}
}
\end{eqnarray}
\noindent where $\Omega_M^1$ is the sheaf of holomorphic  $1$-forms on $M$ and $ X_1,\ldots, X_N$ are the irreducible components of $ X$.

Finally, if  $ X$ is such that $\codim_{M}(\Sing( X)\geq 3$, then there exist the following exact sequence of sheaves on $M$ (see  \cite{Dolga}):
\begin{eqnarray} \label{seq0002}
\centerline{
\xymatrix{
0\ar[r]& \Omega_M^1 \ar[r]&   \Omega_M^1(\log\,  X)\ar[r]  &    \OO_{ X} \ar[r] &0.  \\
}}
\end{eqnarray}

\section{The sheaf of multi-logarithmic forms} In this section we will give some basic definitions on the theory of multi-logarithmic differential forms and the results that we will need in this paper. For more details and properties on this subject see \cite{Ale_s} and \cite{POL}. 

Let $X= X_{1} \cup \cdots \cup X_{k}$ be a decomposition of the reduced hypersurface $X$ in a complex manifold $M$, where each $X_{i}$ is a hypersurface defined by a holomorphic function $h_{i}$, for $i = 1, \dots ,k$, on an open subset $U \subset M$, and $C = X_{1} \cap \cdots \cap X_{k}$ is a reduced complete intersection.

We define a {\it multi-logarithmic $q$-form along} the complete intersection $C$ on an open subset $U \subset X$ by a meromorphic $q$-form $\omega$ on $U$, regular on $U - X$ and such that $\omega$ have at most simple poles along $C$ and 
$$dh_{i}\wedge\omega  \in \sum_{i=1}^{k}\Omega^{q+1}(\widehat{X_{i}}) \ \ \ \mbox{for all} \ \ i \in \{1, \dots,k \}, $$

\noindent where $\widehat{X_{i}}= X_{1} \cup \cdots \cup X_{i-1}\cup X_{i+1}\cup \cdots \cup X_{k}.$

We denote by $\Omega^{q}_{M}(\log C)$ the coherent sheaf of germs of multi-logarithmic $q$-forms along $C$. A. G. Aleksandrov \cite{Ale_s} has proved the following result that characterizes multi-logarithmic forms.

\begin{thm}[A. G. Aleksandrov, \cite{Ale_s}]

Let $\omega \in \Omega^{q}_{M}(X)$, then $\omega$ is multi-logarithmic along $C$ if and only if there is a holomorphic function $g \in \mathcal{O}_{M}$ which is not identically zero on every irreducible component of the $C$, a holomorphic differential form $\xi \in \Omega^{q-k}_{M}$ and a meromorphic $q$-form $\eta \in \sum_{i=i}^{k} \Omega^{q}_{M}(\widehat{X_{i}})$ such that there exists the following representation

$$g\omega = \dfrac{dh_{1}\wedge \cdots \wedge dh_{k}}{h_{1} \cdots h_{k}}\wedge \xi + \eta.$$
\\ 
\end{thm}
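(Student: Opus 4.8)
The plan is to strip the meromorphic bookkeeping away and reduce everything to an algebraic divisibility statement for holomorphic forms modulo the ideal $(h_{1},\dots,h_{k})$. Since the assertion is local, I would work with germs on a small $U\subset M$ and set $\zeta:=h_{1}\cdots h_{k}\,\omega$, which is holomorphic because $\omega\in\Omega^{q}_{M}(X)$ has at most simple poles along $X=X_{1}\cup\cdots\cup X_{k}$. First I would record the elementary identity that a form $\beta$ with $h_{1}\cdots h_{k}\,\beta$ holomorphic lies in $\sum_{j}\Omega^{q+1}_{M}(\widehat{X_{j}})$ exactly when $h_{1}\cdots h_{k}\,\beta\in(h_{1},\dots,h_{k})\,\Omega^{q+1}_{M}$. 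Applying this to $\beta=dh_{i}\wedge\omega$ and using $h_{1}\cdots h_{k}(dh_{i}\wedge\omega)=dh_{i}\wedge\zeta$, the defining multi-logarithmic conditions collapse to the single requirement
\begin{equation*}
dh_{i}\wedge\zeta\in(h_{1},\dots,h_{k})\,\Omega^{q+1}_{M},\qquad i=1,\dots,k. \tag{$\ast$}
\end{equation*}
Multiplying the asserted representation by $h_{1}\cdots h_{k}$ and noting that $h_{1}\cdots h_{k}\,\eta\in(h_{1},\dots,h_{k})\,\Omega^{q}_{M}$ for $\eta\in\sum_{j}\Omega^{q}_{M}(\widehat{X_{j}})$, the representation becomes equivalent to
\begin{equation*}
g\,\zeta\equiv dh_{1}\wedge\cdots\wedge dh_{k}\wedge\xi\pmod{(h_{1},\dots,h_{k})\,\Omega^{q}_{M}}, \tag{$\ast\ast$}
\end{equation*}
with $\xi\in\Omega^{q-k}_{M}$ holomorphic, the two statements being interchanged by dividing or multiplying by $h_{1}\cdots h_{k}$. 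Thus the theorem is the equivalence of $(\ast)$ and the solvability of $(\ast\ast)$.

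For the implication representation $\Rightarrow$ multi-logarithmic I would simply wedge the identity $g\omega=\frac{dh_{1}\wedge\cdots\wedge dh_{k}}{h_{1}\cdots h_{k}}\wedge\xi+\eta$ with $dh_{i}$: the polar term vanishes since $dh_{i}\wedge dh_{1}\wedge\cdots\wedge dh_{k}=0$, leaving $g\,(dh_{i}\wedge\omega)=dh_{i}\wedge\eta\in\sum_{j}\Omega^{q+1}_{M}(\widehat{X_{j}})$, i.e. $g\,(dh_{i}\wedge\zeta)\in(h_{1},\dots,h_{k})\,\Omega^{q+1}_{M}$. Because $g$ is not identically zero on any irreducible component of the reduced complete intersection $C$, it is a non-zero-divisor on $\mathcal{O}_{C}=\mathcal{O}_{M}/(h_{1},\dots,h_{k})$, hence on the free module $\Omega^{q+1}_{M}\otimes\mathcal{O}_{C}$; cancelling $g$ then yields exactly $(\ast)$, and the remaining pole conditions are immediate from $\omega\in\Omega^{q}_{M}(X)$.

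The substantial direction is to produce $g$ and $\xi$ from $(\ast)$. Reducing modulo the ideal, $(\ast)$ says the class $\bar\zeta$ of $\zeta$ in $\Omega^{q}_{M}\otimes\mathcal{O}_{C}$ satisfies $d\bar h_{i}\wedge\bar\zeta=0$ for every $i$. At a smooth point of $C$ the covectors $d\bar h_{1},\dots,d\bar h_{k}$ are linearly independent, and the elementary exterior-algebra fact that a form annihilated by wedging with each of $k$ independent covectors is divisible by their product gives $\bar\zeta\in d\bar h_{1}\wedge\cdots\wedge d\bar h_{k}\wedge\Omega^{q-k}$ there. To turn this pointwise statement into one global identity I would choose $g$ to be a $k\times k$ minor of the Jacobian matrix $(\partial h_{i}/\partial x_{a})$ that is not identically zero on any component of $C$; such a minor exists precisely because $C$ is a codimension-$k$ complete intersection, and it is automatically a non-zero-divisor on $\mathcal{O}_{C}$. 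Inverting $g$ completes $dh_{1},\dots,dh_{k}$ to a coframe, and a Cramer's-rule expansion of $\bar\zeta$ against it produces, after clearing the single power of $g$, an integral identity $g\,\bar\zeta=dh_{1}\wedge\cdots\wedge dh_{k}\wedge\bar\xi$ in $\Omega^{q}_{M}\otimes\mathcal{O}_{C}$ with explicit cofactor coefficients; lifting $\bar\xi$ through the surjection from the free module $\Omega^{q-k}_{M}$ gives a holomorphic $\xi$, which is $(\ast\ast)$, and dividing by $h_{1}\cdots h_{k}$ manufactures $\eta$.

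The hard part will be promoting the generic divisibility at smooth points to an identity valid across the whole germ, including the singular locus of $C$. I expect to control this exactly through the explicit minor $g$: since $g$ is a non-zero-divisor on the reduced ring $\mathcal{O}_{C}$, the free module $\Omega^{q}_{M}\otimes\mathcal{O}_{C}$ injects into its localization at $g$, so an identity between integral forms that holds after inverting $g$ holds already over $\mathcal{O}_{C}$; and the complete-intersection structure (in particular $\mathcal{O}_{C}$ Cohen--Macaulay and reduced) guarantees there is no spurious torsion obstructing either the Cramer construction or the holomorphic lift of $\bar\xi$. Verifying that $g$ can be taken a non-zero-divisor on $\mathcal{O}_{C}$ is the single point that ties the easy and hard directions together.
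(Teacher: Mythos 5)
A preliminary remark: the paper itself offers no proof of this statement --- it is quoted as background and attributed to Aleksandrov \cite{Ale_s} --- so there is no internal argument to compare against; I assess your proposal on its own terms and against the original source. Your strategy is in substance the standard Saito--Aleksandrov argument and it is sound: clearing poles to $\zeta = h_1\cdots h_k\,\omega$, translating both the multi-logarithmic condition and the asserted representation into ideal-membership statements modulo $(h_1,\dots,h_k)\,\Omega^{\bullet}_M$, getting the easy direction by wedging with $dh_i$ and cancelling $g$ (correctly justified: $g$ avoids all minimal primes of the reduced ring $\mathcal{O}_C$, hence is a non-zero-divisor, and multiplication by it is injective on the free module $\Omega^{q+1}_M\otimes\mathcal{O}_C$), and getting the hard direction by exhibiting $dh_1,\dots,dh_k$ as part of a coframe after inverting a Jacobian minor and using the elementary exterior-algebra divisibility lemma, which indeed holds over any commutative ring once the covectors are part of a basis of a free module.

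Two steps need patching, though neither is fatal. First, the existence of a \emph{single} $k\times k$ minor $g$ of $(\partial h_i/\partial x_a)$ not identically zero on \emph{every} irreducible component of $C$ does not follow merely from ``$C$ is a codimension-$k$ complete intersection'': what you actually need is that $C$ is \emph{reduced}, hence generically smooth, so that on each component the Jacobian attains rank $k$ somewhere; and even then, in fixed coordinates different components may select different nonvanishing minors. The fix is routine --- since a germ has finitely many components, a generic linear change of coordinates makes the minor in the first $k$ variables nonvanishing on all of them; alternatively, take any $g$ in the ideal of maximal minors avoiding all minimal primes and note that the conclusion $\bar\zeta\in dh_1\wedge\cdots\wedge dh_k\wedge\bigl(\Omega^{q-k}_M\otimes(\mathcal{O}_C)_g\bigr)$ is local on $\operatorname{Spec}\,(\mathcal{O}_C)_g$, where some minor is invertible near each point --- but it must be said. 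Second, your claim of ``clearing the single power of $g$'' by Cramer is an overstatement: expanding $\bar\zeta$ in the new coframe passes its coefficients through $\Lambda^q$ of the inverse transition matrix, whose entries have denominator $g$, so one gets $g^{N}\bar\zeta = dh_1\wedge\cdots\wedge dh_k\wedge\bar\xi$ only for some $N\geq 1$ in general (the one-power contraction identity $\iota_{\partial}(dh\wedge\zeta)=g\,\zeta-dh\wedge\iota_{\partial}\zeta$ is Saito's trick for $k=1$; its iteration for $k\geq 2$ yields divisibility by the span $\sum_i dh_i\wedge\Omega^{q-1}_M$, not by the product). This is harmless for the theorem as stated, since $g^{N}$ is still not identically zero on any component of $C$ and the statement only asks for \emph{some} $g$ --- but the write-up should say $g^N$ rather than promise a single power. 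With these two repairs your proof is complete, and it coincides in substance with Aleksandrov's original argument.
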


For $q<k$ we have the equality (see \cite{POL}, Remark 2.6): 

$$ \Omega^{q}_{M}(\log C)= \sum_{i=i}^{k} \Omega^{q}_{M}(\widehat{X_{i}}).$$

\noindent Observe that if $q=1$ and $k=2$ we have

\begin{equation}\label{dd01}
\Omega^{1}_{M}(\log C)= \Omega^{1}_{M}(X_{1}) + \Omega^{1}_{M}(X_{2}). 
\end{equation}



\begin{proposition}[see \cite{Moscow}]\label{teo 2.4} Let $X= X_{1} \cup X_{2}$ be a reduced hypersurface on   $M$, where $X_{i}$ is a reduced hypersurface, for $i=1,2,$ and $C= X_{1}\cap X_{2}$ is a reduced  complete  intersection. Then
$$\Omega^{1}_M(\log X) = \Omega^{1}_M(\log X_{1}) + \Omega^{1}_M(\log X_{2}).$$
\end{proposition}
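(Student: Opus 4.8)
The plan is to verify the equality of sheaves locally and to recast it as an identity between two $\mathcal{O}_M$-submodules of $\Omega^1_M$, after which the complete intersection and reducedness hypotheses do the work. Working on a small open set where $X_i=\{h_i=0\}$, so that $X=\{h_1h_2=0\}$ and $C=\{h_1=h_2=0\}$ with $(h_1,h_2)$ a regular sequence, I set $M_i:=\{\beta\in\Omega^1_M : \beta\wedge dh_i\in h_i\,\Omega^2_M\}$. Using only that a logarithmic form is one for which both $\omega$ and $d\omega$ have at most simple poles, one gets $\Omega^1_M(\log X_i)=\frac{1}{h_i}M_i$ at once; for the union, writing $\omega=\alpha/(h_1h_2)$ with $\alpha$ holomorphic and using $d(h_1h_2)=h_2\,dh_1+h_1\,dh_2$, the condition that $d\omega$ have simple poles along $X$ reads $h_2(\alpha\wedge dh_1)+h_1(\alpha\wedge dh_2)\in (h_1h_2)\Omega^2_M$, and reducing modulo $h_1$ and modulo $h_2$ (where $h_2$, resp.\ $h_1$, is a nonzerodivisor, by the regular sequence property) shows this is equivalent to $\alpha\in M_1\cap M_2$. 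Thus $\Omega^1_M(\log X)=\frac{1}{h_1h_2}(M_1\cap M_2)$, while $\Omega^1_M(\log X_1)+\Omega^1_M(\log X_2)=\frac{1}{h_1h_2}(h_2M_1+h_1M_2)$, so the Proposition is equivalent to the module identity $M_1\cap M_2=h_2M_1+h_1M_2$.

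The inclusion $h_2M_1+h_1M_2\subseteq M_1\cap M_2$ is immediate and corresponds to the evident fact that a form logarithmic along a single component is logarithmic along $X$: for $\beta\in M_1$ one has $(h_2\beta)\wedge dh_1=h_2(\beta\wedge dh_1)\in h_1\Omega^2_M$ and $(h_2\beta)\wedge dh_2=h_2(\beta\wedge dh_2)\in h_2\Omega^2_M$, so $h_2M_1\subseteq M_1\cap M_2$, and symmetrically $h_1M_2\subseteq M_1\cap M_2$. For the reverse inclusion, the key reduction is that once an $\alpha\in M_1\cap M_2$ is written as $\alpha=h_2\alpha_1+h_1\alpha_2$ with $\alpha_1,\alpha_2$ holomorphic, the two summands automatically lie in $M_1$ and $M_2$: from $\alpha\wedge dh_1\in h_1\Omega^2_M$ and $h_1(\alpha_2\wedge dh_1)\in h_1\Omega^2_M$ one deduces $h_2(\alpha_1\wedge dh_1)\in h_1\Omega^2_M$, hence $\alpha_1\wedge dh_1\in h_1\Omega^2_M$ because $h_2$ is a nonzerodivisor on the free module $\Omega^2_M/h_1\Omega^2_M$; so $\alpha_1\in M_1$, and symmetrically $\alpha_2\in M_2$. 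Everything therefore comes down to showing that each $\alpha\in M_1\cap M_2$ admits such a decomposition, i.e.\ that $M_1\cap M_2\subseteq (h_1,h_2)\Omega^1_M$.

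This last containment is where I expect the main difficulty, and where the hypotheses are essential. I would reduce modulo $(h_1,h_2)$ and show that the image $\bar\alpha$ of $\alpha$ vanishes in the locally free $\mathcal{O}_C$-module $\Omega^1_M\otimes\mathcal{O}_C$. Since $\alpha\wedge dh_i\in h_i\Omega^2_M\subseteq (h_1,h_2)\Omega^2_M$, we get $\bar\alpha\wedge\overline{dh_1}=\bar\alpha\wedge\overline{dh_2}=0$ in $\Omega^2_M\otimes\mathcal{O}_C$. Now, because $C=X_1\cap X_2$ is a \emph{reduced} complete intersection, $X_1$ and $X_2$ are smooth and meet transversally along a dense open subset of $C$ (at the generic point of each component of $C$ the ideal $(h_1,h_2)$ is the full maximal ideal of a two–dimensional regular local ring, which forces $dh_1$ and $dh_2$ to be linearly independent there); at such points two linearly independent covectors annihilating $\bar\alpha$ under the wedge product force $\bar\alpha=0$. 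As $C$ is reduced and $\Omega^1_M\otimes\mathcal{O}_C$ is locally free, a section vanishing on a dense subset vanishes identically, so $\bar\alpha=0$ and $\alpha\in(h_1,h_2)\Omega^1_M$. The crucial role of reducedness is exactly this generic transversality: if $C$ were non-reduced — for instance if $X_1$ were singular, or tangent to $X_2$, along a codimension-two locus — the covectors $dh_1,dh_2$ could become dependent there and the decomposition would break down.
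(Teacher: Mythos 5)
Your argument is correct, and you should be aware that the paper itself offers no proof of this Proposition: it is imported from \cite{Moscow} with only a citation, so there is no internal argument to compare against and what you have written functions as a self-contained substitute. Your normalization of the problem --- reducing the sheaf identity to the local module identity $M_1\cap M_2=h_2M_1+h_1M_2$ inside $\Omega^1_M$, with $M_i=\{\beta:\beta\wedge dh_i\in h_i\Omega^2_M\}$ --- is clean, and the two genuinely nontrivial points are handled properly: first, that any decomposition $\alpha=h_2\alpha_1+h_1\alpha_2$ of an $\alpha\in M_1\cap M_2$ automatically has $\alpha_1\in M_1$ and $\alpha_2\in M_2$, by cancelling the nonzerodivisor $h_2$ (resp.\ $h_1$) modulo $h_1$ (resp.\ $h_2$); and second, the containment $M_1\cap M_2\subseteq(h_1,h_2)\Omega^1_M$, which you derive from the linear independence of $dh_1$ and $dh_2$ at the generic point of each component of $C$ together with reducedness of $C$ --- this is exactly where the hypothesis that $C$ is a \emph{reduced} complete intersection is consumed, and your identification of that as the crux is accurate. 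For comparison, the route suggested by the surrounding text (and by \cite{Ale_s} and \cite{POL}) would pass through the multi-logarithmic sheaf and residue exact sequences, e.g.\ via $\Omega^{1}_{M}(\log C)=\Omega^{1}_{M}(X_{1})+\Omega^{1}_{M}(X_{2})$; your direct local computation avoids that machinery, is more elementary, and makes visible precisely which hypothesis is used where, at the cost of being a coordinate argument rather than a structural one.
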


\begin{lemma}\label{basic}
Let $\tilde{D} = D_{1} \cup D_{2}$ be  a normal crossing divisor, where $D_{1}, D_{2}$ be smooth hypersurfaces in a complex manifold $\tilde{M}$, such that $C = D_{1} \cap D_{2}$ is a complete intersection. Then
$$c(\Omega_{\tilde{M}}^{1}(\log C)) = c(\Omega_{\tilde{M}}^{1}(\log \tilde{D})).$$

\end{lemma}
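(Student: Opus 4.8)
The goal is to show that the total Chern classes of $\Omega^1_{\tilde M}(\log C)$ and $\Omega^1_{\tilde M}(\log \tilde D)$ agree, where $C = D_1 \cap D_2$ is the complete intersection and $\tilde D = D_1 \cup D_2$ is the normal crossing divisor. Since both sheaves are built from the same two smooth divisors $D_1, D_2$, the natural strategy is to compute each Chern class explicitly from known short exact sequences and compare. The key tool for the multi-logarithmic side is Proposition \ref{teo 2.4} together with formula (\ref{dd01}): for $k=2$, $q=1$, we have $\Omega^1_{\tilde M}(\log C) = \Omega^1_{\tilde M}(D_1) + \Omega^1_{\tilde M}(D_2)$, and also $\Omega^1_{\tilde M}(\log \tilde D) = \Omega^1_{\tilde M}(\log D_1) + \Omega^1_{\tilde M}(\log D_2)$. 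So the whole question reduces to comparing these two sums inside the sheaf of meromorphic $1$-forms.

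\textbf{First steps.}
First I would set $\mathcal{L}_i = \mathcal{O}_{\tilde M}(D_i)$ and write $c_1(\mathcal{L}_i) = D_i$. On the normal crossing side, since $\tilde D$ has normal crossings the sheaf $\Omega^1_{\tilde M}(\log \tilde D)$ is locally free, and the residue exact sequence (\ref{seq1}) gives
\begin{equation*}
0 \longrightarrow \Omega^1_{\tilde M} \longrightarrow \Omega^1_{\tilde M}(\log \tilde D) \longrightarrow \mathcal{O}_{D_1} \oplus \mathcal{O}_{D_2} \longrightarrow 0,
\end{equation*}
so by Whitney's formula $c(\Omega^1_{\tilde M}(\log \tilde D)) = c(\Omega^1_{\tilde M}) \cdot c(\mathcal{O}_{D_1}) \cdot c(\mathcal{O}_{D_2})$, and using $c(\mathcal{O}_{D_i}) = (1 + D_i)^{-1}$ is not quite right for a structure sheaf; instead I would use that the Chern class of $\mathcal{O}_{D_i}$ as computed from $0 \to \mathcal{O}_{\tilde M}(-D_i) \to \mathcal{O}_{\tilde M} \to \mathcal{O}_{D_i} \to 0$ gives $c(\mathcal{O}_{D_i}) = (1 - D_i)^{-1}$, hence
\begin{equation*}
c(\Omega^1_{\tilde M}(\log \tilde D)) = \frac{c(\Omega^1_{\tilde M})}{(1 - D_1)(1 - D_2)}.
\end{equation*}
For the multi-logarithmic side I would then analyze $\Omega^1_{\tilde M}(\log C)$ via its own structure. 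Here the crucial input is that since $C = D_1 \cap D_2$ is smooth of codimension $2$ (being a transverse intersection of smooth divisors), there is an analogous residue-type resolution for multi-logarithmic forms, which I would extract from \cite{Ale_s} or \cite{POL}, expressing $\Omega^1_{\tilde M}(\log C)$ as an extension whose quotient contributes the same multiplicative factors $(1-D_1)^{-1}(1-D_2)^{-1}$ to the total Chern class.

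\textbf{The main obstacle.}
The delicate point is that $\Omega^1_{\tilde M}(\log C)$ need not be locally free — its definition only guarantees coherence and reflexivity in general — so I cannot directly apply Whitney's formula to a short exact sequence involving it. The resolution I expect to use is the comparison (\ref{dd01}): the sum $\Omega^1_{\tilde M}(D_1) + \Omega^1_{\tilde M}(D_2)$ sits inside meromorphic forms, and I would compute its Chern class by relating it via exact sequences to $\Omega^1_{\tilde M}(D_1) \oplus \Omega^1_{\tilde M}(D_2)$ modulo the intersection $\Omega^1_{\tilde M}(D_1) \cap \Omega^1_{\tilde M}(D_2)$. The sum-and-intersection exact sequence
\begin{equation*}
0 \longrightarrow \Omega^1_{\tilde M}(D_1) \cap \Omega^1_{\tilde M}(D_2) \longrightarrow \Omega^1_{\tilde M}(D_1) \oplus \Omega^1_{\tilde M}(D_2) \longrightarrow \Omega^1_{\tilde M}(\log C) \longrightarrow 0
\end{equation*}
lets me write $c(\Omega^1_{\tilde M}(\log C))$ as a ratio of Chern classes of sheaves I can control. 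Carrying this out for both sides and checking that the resulting rational expressions in $D_1, D_2$ coincide is the heart of the argument; I expect the equality to hold because both reductions produce the same denominator coming from the two divisors, and the hard part will be justifying the intermediate exact sequences (and that the correction terms supported on the codimension-$2$ locus $C$ do not affect the total Chern class), since the failure of local freeness means I must argue with reflexive sheaves and possibly invoke that Chern classes are insensitive to modifications in codimension $\geq 2$ when computing via $A_\ast \tilde M$.
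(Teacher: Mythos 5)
Your plan follows the paper's own proof almost step for step: the residue sequence (\ref{seq1}) handles the normal-crossing side, and the sum-plus-intersection exact sequence together with the identification (\ref{dd01}) handles the multi-logarithmic side. The one detail you leave open --- what the intersection $\Omega^{1}_{\tilde{M}}(D_{1})\cap\Omega^{1}_{\tilde{M}}(D_{2})$ is --- resolves exactly as you anticipate: it is $\Omega^{1}_{\tilde{M}}$ itself, so both total Chern classes come out to $c(\Omega^{1}_{\tilde{M}})\,c(\mathcal{O}_{D_{1}})\,c(\mathcal{O}_{D_{2}})$, which is the computation the paper carries out.
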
 

\begin{proof} In fact, by the following exact sequence
$$0 \longrightarrow \Omega_{X}^{1} \longrightarrow \Omega_{X}^{1}(D_{1}) \oplus \Omega_{\tilde{M}}^{1}(D_{2}) \longrightarrow \Omega_{\tilde{M}}^{1}(D_{1}) + \Omega_{X}^{1}(D_{2}) \longrightarrow 0$$
\noi one has by Chern class properties,
$$c\Big( \Omega_{X}^{1}(D_{1}) \oplus \Omega_{X}^{1}(D_{2}) \Big) = c\Big( \Omega_{X}^{1}(D_{1}\Big) c \Big( \Omega_{X}^{1}(D_{2}) \Big) =  c\Big( \Omega_{X}^{1}(D_{1}) + \Omega_{X}^{1}(D_{2}) \Big)c(\Omega_{X}^{1}).$$ 

\noi Let us consider the exact sequence
$$0 \longrightarrow \Omega_{X}^{1} \longrightarrow \Omega_{X}^{1}(D_{i}) \longrightarrow \mathcal{O}_{D_{i}} \longrightarrow 0.$$
\noi then, one has \ \ $c(\Omega_{X}^{1}(D_{i})) = c(\Omega_{X}^{1})c(\mathcal{O}_{D_{i}}),$ \ \ \ where $c_{j}(\mathcal{O}_{D_{i}}) = c_{1}([D_{i}])^{j}.$ \\\

\noi Then $c\Big( \Omega_{X}^{1}(D_{1}) + \Omega_{X}^{1}(D_{2}) \Big) = c(\mathcal{O}_{D_{1}}) c(\mathcal{O}_{D_{2}})c(\Omega_{X}^{1})$. Since $\Omega_{X}^{1}(\log C) = \Omega_{X}(D_{1}) + \Omega_{X}(D_{2}),$ we get

$$c(\Omega_{X}^{1}(\log C)) = c(\mathcal{O}_{D_{1}})c(\mathcal{O}_{D_{2}})c(\Omega_{X}).$$
On the other hand, since $\tilde{D} = D_{1}\cup D_{2}$ is a normal crossing divisor, we have the following exact sequence, see sequence (\ref{seq1}),
$$0 \longrightarrow \Omega_{\tilde{M}}^{1} \longrightarrow \Omega_{\tilde{M}}^{1}(\log(D_{1} \cup D_{2})) \longrightarrow { \mathcal{O}_{D_1} \oplus \mathcal{O}_{D_{2}}} \longrightarrow 0.$$

It is follows that $c(\Omega_{\tilde{M}}^{1}(\log(D_{1} \cup D_{2}))) = c(\mathcal{O}_{D_{1}})c(\mathcal{O}_{D_{2}})c(\Omega_{X})$  and we finish the prove.
\end{proof}
\hspace{0.1cm}
\section{Proof of Theorem \ref{thm 1.1} }

\begin{proof} 



Let us consider the exact sequence (\ref{seq0002})  of reflexive sheaves on $\tilde{M}$ 

$$ 0\longrightarrow \Omega^{1}_{\tilde{M}} \longrightarrow \Omega^{1}_{\tilde{M}}(\log \tilde{X}) \longrightarrow \mathcal{O}_{\tilde{X}} \longrightarrow 0$$

So we have

\begin{equation}\label{eq02} c(\Omega^{1}_{\tilde{M}}(\log \tilde{X}))= c(\Omega^{1}_{\tilde{M}})c(\mathcal{O}_{\tilde{X}})\end{equation}

Now let us recall the exact sequence, see (\cite{Daniel}, p.84).
$$ 0 \longrightarrow \mathcal{O}(-\tilde{X}) \longrightarrow  \mathcal{O}_{\tilde{M}} \longrightarrow \mathcal{O}_{\tilde{X}} \longrightarrow 0$$

So

\begin{equation}\label{eq03} c(\mathcal{O}_{\tilde{X}}) = \dfrac{1}{ c(\mathcal{O}(-\tilde{X}))}\end{equation}

Then from (\ref{eq02} ) and (\ref{eq03})

$$c(\Omega^{1}_{\tilde{M}}(\log \tilde{X}))= \dfrac{c(\Omega^{1}_{\tilde{M}})}{c(\mathcal{O}(-\tilde{X}))}$$

We take the dual in previous expression

$$c(\Omega^{1}_{\tilde{M}}(\log \tilde{X})^{\vee})= \dfrac{c(T \tilde{M})}{c(\mathcal{O}(\tilde{X}))} = \dfrac{c(T \tilde{M})}{1 + c_{1}(\mathcal{O}(\tilde{X}))} = \dfrac{c(T \tilde{M})}{1 + \tilde{X}} $$

Let us consider the following calculation 

$$\begin{array}{ccl} 
c(T\tilde{M})\Big(1- \dfrac{1}{1 + \tilde{X}}\Big) \cap [\tilde{M}] &=& 
c(T\tilde{M}) \Big(\dfrac{\tilde{X}}{1 + \tilde{X}}\Big)\cap [\tilde{M}] \\ &=& 
j_{\ast}c(T\tilde{M}) \cap \Big(\dfrac{[\tilde{X}]}{1 + \tilde{X}}\Big ) \\ &=& j_{\ast}c_{F}(\tilde{X}),
\end{array}
$$

\noindent where $j : \tilde{X} \longrightarrow \tilde{M}$ is the inclusion morphism. 

On the other hand,
\begin{equation}\begin{array}{rll}\label{eq05}

j_{\ast}c_{F}(\tilde{X})= & c(T\tilde{M})\Big(1- \dfrac{1}{1 + \tilde{X}}\Big) \cap [\tilde{M}]= c(T\tilde{M})\cap [\tilde{M}] - \dfrac{c(T\tilde{M})}{1 + \tilde{X}}\cap [\tilde{M}] \\ \\

= & c(T\tilde{M})\cap[\tilde{M}]- c(\Omega^{1}_{\tilde{M}}(\log \tilde{X})^{\vee})\cap [\tilde{M}].

\end{array}\end{equation}





Using the expression in Theorem \ref{thm2.1} to $c_{F}(\tilde{X})$  and applying the homomorphism $j_{\ast}$, one has
$$j_{\ast}c_{F}(\tilde{X}) = j_{\ast}c_{SM}(\tilde{X})  - j_{\ast}c(\mathcal{L})^{\dim \tilde{X}} \cap \Big(\mu_{\mathcal{L}}(\tilde{Y})^{\vee} \otimes_{\tilde{M}} \mathcal{L}    \Big).$$
Now, we use the above expression in equation (\ref{eq05})
$$j_{\ast}c_{SM}(\tilde{X})  - j_{\ast}c(\mathcal{L})^{\dim \tilde{X}} \cap \Big(\mu_{\mathcal{L}}(Y)^{\vee} \otimes_{\tilde{M}} \mathcal{L}    \Big) = c(T\tilde{M})\cap[\tilde{M}]- c(\Omega^{1}_{\tilde{M}}(\log \tilde{X})^{\vee})\cap [\tilde{M}].$$
$$ 
c(\Omega^{1}_{\tilde{M}}(\log \tilde{X})^{\vee})\cap [\tilde{M}] - j_{\ast}c(\mathcal{L})^{\dim \tilde{X}} \cap \Big(\mu_{\mathcal{L}}(\tilde{Y})^{\vee} \otimes_{\tilde{M}} \mathcal{L}    \Big)  = c_{SM}(\tilde{M}) - j_{\ast}c_{SM}(\tilde{X})$$ 
\begin{equation}\label{eq45}
c(\Omega^{1}_{\tilde{M}}(\log \tilde{X})^{\vee})\cap [\tilde{M}] - j_{\ast}c(\mathcal{L})^{\dim \tilde{X}} \cap \Big(\mu_{\mathcal{L}}(\tilde{Y})^{\vee} \otimes_{\tilde{M}} \mathcal{L}    \Big)  = c_{\ast}(\mathbb{1}_{\tilde{M}\setminus \tilde{X}}).  
\end{equation}
 
Using Schwartz-MacPherson class properties one has,
$$\pi_{\ast}c_{\ast}(\mathbb{1}_{\tilde{M}\setminus \tilde{X}}) =
c_{\ast}\pi_{\ast}(\mathbb{1}_{\tilde{M}\setminus \tilde{X}}) = c_{\ast}(\mathbb{1}_{M\setminus X}) = c(TM)\cap [M] - i_{\ast}c_{SM}(X).$$







So, applying $\pi_{\ast}$ in the equation (\ref{eq45}), we have
$$i_{\ast}c_{SM}(X) =c(TM)\cap [M]- \pi_{\ast}c(\Omega^{1}_{\tilde{M}}(\log \tilde{X})^{\vee})\cap [\tilde{M}] + \pi_{\ast}j_{\ast}c(\mathcal{L})^{\dim \tilde{X}} \cap \Big(\mu_{\mathcal{L}}(\tilde{Y})^{\vee} \otimes_{\tilde{M}} \mathcal{L}    \Big).$$
\end{proof}

\section{Proof of Corollary \ref{corollary 1.3} }

\begin{proof}

We note by Schwartz-MacPherson class properties one has 
 
$$ \pi_{\ast} c_{SM}({ \tilde{M}\setminus{\tilde{X}}}) = c_{SM}(M\setminus X).$$

So, by using the degree properties (see \cite{Fulton}, p. 13) we have

$$
 \int_{ \tilde{M}} c_{SM}({ \tilde{M}\setminus{\tilde{X}}}) =   \int_{M }  \pi_{\ast}c_{SM}( \tilde{M}\setminus{\tilde{X}})  =\int_M c_{SM}(M\setminus X) = \chi(M \setminus X).
$$
Taking the degrees in equation (\ref{eq45})
 $$ \begin{array}{lll}  \chi(M\setminus X) &=&  \displaystyle \int_{\tilde{M}} c(\Omega^{1}  _{\tilde{M}}(\log \tilde{X})^{\vee})\cap [\tilde{M}]
- \int_{\tilde{M}} j_{\ast}c(\mathcal{L})^{\dim \tilde{X}} \cap \Big(\mu_{\mathcal{L}}(\tilde{Y})^{\vee} \otimes \mathcal{L}    \Big)\\  \\ 

 &=& \displaystyle \int_{\tilde{M}} c(\Omega^{1}_{\tilde{M}}(\log \tilde{X})^{\vee})\cap [\tilde{M}]
+ (-1)^{n+1} \int_{\tilde{M}} \mu_{\mathcal{L}}(\tilde{Y}).
\\ \\ 

\end{array} 
$$

The last simplification follows from Aluffi (see \cite{Aluffi2}, \S 4).
\end{proof}

\section{Proof of Theorem \ref{multlog}}

\begin{proof} How $\tilde{D} = D_{1} \cup D_{2}$ is a normal crossing divisor,  we have the short exact sequence (\ref{seq1})  

$$0 \longrightarrow \Omega_{\tilde{M}}^{1} \longrightarrow \Omega_{\tilde{M}}^{1}(\log(D_{1} \cup D_{2})) \longrightarrow {D_1 \oplus D_2}  \longrightarrow 0$$

Following the proof of Theorem 1 in (\cite{Aluffi}, p.621) we have

$$j_{\ast}c_{SM}(\tilde{D}) = c(T\tilde{M}) \cap [\tilde{M}] - c(\Omega^{1}_{\tilde{M}}(\log \tilde{D})^{\vee})\cap [\tilde{M}].$$

Now, adding the factor $(-j_{\ast}c_{SM}(C))$ in both sides and applying the morphism $\pi_{\ast}$

$$\begin{array}{ccl}

j_{\ast}c_{SM}(\tilde{D}) - j_{\ast}c_{SM}(C)= c(T\tilde{M}) \cap [\tilde{M}] - c(\Omega^{1}_{\tilde{M}}(\log \tilde{D})^{\vee})\cap [\tilde{M}] - j_{\ast}c_{SM}(C) \\ \\

j_{\ast}c_{SM}(\tilde{D}) - j_{\ast}c_{SM}(C)= c_{\ast}(\mathbb{1}_{\tilde{M}\setminus \ C}) - c(\Omega^{1}_{\tilde{M}}(\log \tilde{D})^{\vee})\cap [\tilde{M}] \\ \\

\pi_{\ast}j_{\ast}c_{SM}(\tilde{D}) - \pi_{\ast}j_{\ast}c_{SM}(C) =c_{\ast}(\mathbb{1}_{M \setminus \ X}) - \pi_{\ast}c(\Omega^{1}_{\tilde{M}}(\log \tilde{D})^{\vee})\cap [\tilde{M}].

\end{array}$$

To finish the proof we use the Lemma \ref{basic}
$$\pi_{\ast}j_{\ast}c_{SM}(\tilde{D}) - \pi_{\ast}j_{\ast}c_{SM}(C) =  c_{\ast}(\mathbb{1}_{M \setminus \ X}) - \pi_{\ast}c(\Omega^{1}_{\tilde{M}}(\log C)^{\vee})\cap [\tilde{M}].$$

Rearranging it
$$i_{\ast}c_{SM}(X) = c(TM)\cap [M]- \pi_{\ast} \Big( c(\Omega^{1}_{\tilde{M}}(\log C)^{\vee})\cap [\tilde{M}]\Big) + \pi_{\ast}j_{\ast}c_{SM}(C) - \pi_{\ast}j_{\ast}c_{SM}(\tilde{D}),
$$
\end{proof}

\end{document}